\documentclass{amsart}
\usepackage[utf8]{inputenc}
\usepackage[all]{xy}
\usepackage{upgreek,amsthm,bbm,bm}
\usepackage{color,amssymb, comment, mathrsfs,tikz,amsmath,amsfonts,stmaryrd,tikz-cd,hyperref,tikz-cd}
\usepackage{colonequals}
\usetikzlibrary{matrix}
\usetikzlibrary{arrows}

\makeatletter
\@namedef{subjclassname@2020}{\textup{2020} Mathematics Subject Classification}
\makeatother

\hypersetup{
    colorlinks=true,
    linkcolor=blue,
    filecolor=magenta,      
    urlcolor=cyan,
} 

\usepackage[capitalise]{cleveref}
\crefformat{equation}{(#2#1#3)}
\crefrangeformat{equation}{(#3#1#4--#5#2#6)}
\crefformat{enumi}{(#2#1#3)}
\crefrangeformat{enumi}{(#3#1#4--#5#2#6)}

\newcommand{\hopf}{\mathcal{H}}

\newtheorem{theorem}{Theorem}[section]
\newtheorem{question}[theorem]{Question}
\newtheorem{lemma}[theorem]{Lemma}
\newtheorem{proposition}[theorem]{Proposition}
\newtheorem{corollary}[theorem]{Corollary}

\newcounter{intro}
\newtheorem{introthm}[intro]{Theorem}

\theoremstyle{definition}

\newtheorem{example}[theorem]{Example}

\newtheorem{notation}[theorem]{Notation}
\newtheorem{chunk}[theorem]{}
\newtheorem{remark}[theorem]{Remark}
\newtheorem*{ack}{Acknowledgements}

\newcommand{\AQC}[3]{\operatorname{D}(#1/#2;#3)}

\newcommand{\F}[1]{\mathrm{F}^{#1}}

\newcommand{\del}{\partial}

\newcommand{\xla}[1]{\xleftarrow{#1}}
\newcommand{\xra}[1]{\xrightarrow{#1}}

\newcommand{\shift}{\mathsf{\Sigma}}

\newcommand{\Ext}{\operatorname{Ext}}
\newcommand{\Tor}{\operatorname{Tor}}
\newcommand{\Der}{\operatorname{Der}}
\newcommand{\ind}{\operatorname{ind}}

\newcommand{\hh}{\mathrm{H}}

\newcommand{\Hom}{\operatorname{Hom}}

\newcommand{\cls}[1]{\operatorname{cls}(#1)}

\newcommand{\m}{\mathfrak{m}}
\newcommand{\vp}{\varphi}
\newcommand{\ve}{\varepsilon}
\newcommand{\lotimes}{\otimes^\mathbf{L}}

\newcommand{\g}{\mathfrak{g}}
\newcommand{\h}{\mathfrak{h}}
\newcommand{\f}{\mathfrak{f}}
\newcommand{\im}{\mathrm{Im}}

\newcommand{\Ker}{\mathrm{Ker}}


\title[The homotopy Lie algebra of a tensor product]{The homotopy Lie algebra of a Tor-independent tensor product}

\author[L.~Ferraro]{Luigi Ferraro}
\address{Department of Mathematics,
Texas Tech University, Lubbock, TX 79409, U.S.A.}
\email{lferraro@ttu.edu}

\author[M.~Gheibi]{Mohsen Gheibi}
\address{Department of Mathematics,
Florida A\&M University,
Tallahassee, FL 32307 }
\email{mohsen.gheibi@famu.edu}

\author[D.~A.~Jorgensen]{David A. Jorgensen}
\address{Department of Mathematics, University of Texas,  Arlington, TX 76019, USA}
\email{djorgens@uta.edu}

\author[N.~Packauskas]{Nicholas Packauskas}
\address{Department of Mathematics,
SUNY Cortland, Cortland, NY 13045, U.S.A.}
\email{nicholas.packauskas@cortland.edu}

\author[J.~Pollitz]{Josh Pollitz}
\address{Department of Mathematics,
University of Utah, Salt Lake City, UT 84112, U.S.A.}
\email{pollitz@math.utah.edu}

\date{\today}
\keywords{homotopy Lie algebra, Tor-independence, DG algebras, Poincar\'{e} series, Golod homomorphisms, (quasi-)complete intersection homomorphisms}
\subjclass[2020]{13D02, 13D07, 16E45}

\begin{document}

\begin{abstract}
In this article we investigate a pair of surjective local ring maps  $S_1\leftarrow R\to S_2$ and their relation to the canonical projection $R\to S_1\otimes_R S_2$, where $S_1,S_2$ are Tor-independent over $R$.
Our main result asserts a  structural  connection between the homotopy Lie algebra of $S\colonequals S_1\otimes_R S_2$, denoted $\pi(S)$, in terms of those of $R,S_1$ and $S_2$. Namely,  $\pi(S)$ is the pullback of (adjusted) Lie algebras along the maps $\pi(S_i)\to \pi(R)$ in various cases, including when the maps above have residual characteristic zero. Consequences to the main  theorem include structural results on  Andr\'{e}--Quillen cohomology,  stable cohomology, and Tor algebras, as well as an equality relating the Poincar\'{e} series of the common residue field of
$R,S_1,S_2$ and $S$.
\end{abstract}

\maketitle
\section*{Introduction}
Given a pair of Tor-independent modules over a commutative ring, there are relationships between homological properties of the modules and their tensor product.   Building upon work in  \cite{tensor,small,JM,Keller,Vasconcelos}, we focus on surjective local ring maps whose targets are Tor-independent over their common source; the minimal intersection rings introduced in \cite{JM} served as the starting place for the present investigation. The main result of this article establishes an especially strong connection between homotopical aspects of the ring maps and their tensor product.

Let $R$ be a commutative  noetherian local ring with residue field $k$. Let $S_1, S_2$ be quotients of $R$ which are Tor-independent, that is,  $\Tor^R_i(S_1, S_2) = 0$ for $i > 0$.  With inspiration from \cite{tensor}, setting $S = S_1 \otimes_R S_2$, this paper examines relationships between  $R,$  $S_1,$  $S_2$ and $S$ through the lens of their homotopy Lie algebras. We write $\pi(R)$ for the homotopy Lie algebra of $R$, which is a graded Lie $k$-algebra naturally associated  to the ring $R$.  We recall its construction in  \cref{HomotopyLieAlgebra}. These homotopy Lie algebras have been adopted from rational homotopy theory to local algebra by Avramov~\cite{Asterisque}, and their importance is, in part, due to the ring-theoretic properties they encode; see, for example, \cite{Golod,AH:1986,AH:1987,Briggs,Briggs:2021,FiveAuthorPaper} as well as \cref{p:parallelmaps}.

Our main result is \cref{introthm1}, below; before stating it we set notation and terminology. We write $\g\times_{\h}\g'$ for the pullback along Lie algebra maps $\g\to \h\leftarrow \g'.$ Our theorem applies to surjective local maps of residual characteristic zero, as well as two large classes of ring maps having slightly technical definitions: namely maps that are \emph{almost small} or have \emph{finite weak category} recalled in \cref{c:almostsmall} and \cref{d:WeakCat}, respectively. The former generalizes the small homomorphisms of Avramov~\cite{small} and were introduced in \cite{AlgebraRetracts}, while the latter have appeared in \cite{Flat,AlgebraRetracts,Briggs,rational,GL}; both contain the class of maps whose source is regular.

\begin{introthm}\label{introthm1}
Suppose $S_1\xla{\vp_1} R\xra{\vp_2}S_2$ is a Tor-independent pair of surjective local ring maps  with residue field $k$, and set $S=S_1\otimes_R S_2$. Consider the following  conditions:
\begin{enumerate}
  \item $k$ has characteristic zero;
    \item at least one $\vp_i$ is almost small;
    \item each $\vp_i$ has finite weak category.
\end{enumerate} If any of the conditions above hold, then there is a naturally induced isomorphism of graded Lie algebras \[\pi(S)\cong \pi(S_1)\times_{\pi(R)}\pi(S_2)\,.\]
\end{introthm}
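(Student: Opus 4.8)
The plan is to obtain the isomorphism from two ingredients: a splitting of the \emph{relative} homotopy Lie algebra of the composite projection $\psi\colon R\to S$, and the change-of-rings long exact sequences relating relative and absolute homotopy Lie algebras. The commutative square built from $\vp_1,\vp_2$ and the two canonical surjections $S_i\to S$ induces, by functoriality of $\pi(-)$, a commutative square of graded Lie algebra maps and hence a canonical comparison homomorphism $\Theta\colon\pi(S)\to\pi(S_1)\times_{\pi(R)}\pi(S_2)$; everything reduces to showing $\Theta$ is bijective in each degree. In degree $1$ this is unconditional and elementary: $\pi^1$ of a local ring is the $k$-dual of its cotangent space, so -- using $S=R/(\ker\vp_1+\ker\vp_2)$ and the fact that annihilators in a dual space satisfy $A^{\perp}\cap B^{\perp}=(A+B)^{\perp}$ -- one sees that $(\m_S/\m_S^2)^{*}$ is the pullback of $(\m_{S_1}/\m_{S_1}^2)^{*}\to(\m_R/\m_R^2)^{*}\leftarrow(\m_{S_2}/\m_{S_2}^2)^{*}$, so $\Theta$ is an isomorphism in degree $1$.

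The heart of the proof -- and the only place Tor-independence is used -- is the behaviour of $\pi(\psi)$. Tor-independence says precisely that $S\simeq S_1\dt_R S_2$; deriving the base change along $R\to k$ and using associativity of $\dt$ then identifies the homotopy fiber of $\psi$, as a DG $k$-algebra, with $(S_1\dt_R k)\dt_k(S_2\dt_R k)$, the derived tensor product \emph{over the field $k$} of the homotopy fibers of $\vp_1$ and $\vp_2$. Equivalently, in the language of acyclic closures: if $R\langle X^{(i)}\rangle\xra{\simeq}S_i$ is the acyclic closure of $\vp_i$, then Tor-independence forces $R\langle X^{(1)}\sqcup X^{(2)}\rangle\xra{\simeq}S$ to be the acyclic closure of $\psi$, with its differential not mixing the two families of variables; this is the convenient formulation in characteristic $2$, where the restricted/divided-power structure must be carried along. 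Applying $\RHom_{(-)}(k,k)$, the K\"unneth isomorphism over the field $k$ together with $U(\g)\otimes_k U(\g')\cong U(\g\times\g')$ yield an isomorphism of graded Lie algebras
\[
\pi(\psi)\ \cong\ \pi(\vp_1)\times\pi(\vp_2),
\]
compatible with the canonical maps to $\pi(S)$ and to $\pi(S_1),\pi(S_2)$. None of the hypotheses (1)--(3) is used here.

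What remains is the comparison of relative and absolute homotopy Lie algebras, and this is exactly where (1)--(3) enter. For each of the chains $R\to S_i\to k$ and $R\to S\to k$ one wants the Jacobi--Zariski long exact sequence
\[
\cdots\longrightarrow\pi^n(\vp_i)\longrightarrow\pi^n(S_i)\longrightarrow\pi^n(R)\longrightarrow\pi^{n+1}(\vp_i)\longrightarrow\cdots
\]
(and its analogue for $\psi$), together with the identification, forced by naturality, of the connecting map for $\psi$ with $(\partial_{\vp_1},\partial_{\vp_2})$ under the splitting of the previous paragraph. Such a sequence is not available for an arbitrary surjective local map, but it is under each of (1)--(3): in characteristic zero it is the Andr\'e--Quillen cohomology sequence; for almost small maps it follows from the defining homological condition (recall that $\vp$ is small precisely when $\Ext_S(k,k)\to\Ext_R(k,k)$ is surjective, equivalently $\pi(S)\to\pi(R)$ is surjective and $\pi(\vp)\hookrightarrow\pi(S)$, and almost smallness is the weakening that still pins down the relevant maps); and for maps of finite weak category the minimal model truncates enough to produce the sequence. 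Feeding $\pi(\psi)\cong\pi(\vp_1)\times\pi(\vp_2)$ and the description of the connecting maps into the three exact sequences and chasing the resulting ladder degree-by-degree -- a five-lemma argument in which $\pi(R)$ occurs identically in all three -- shows $\Theta$ is bijective in every degree, hence an isomorphism of graded Lie algebras.

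The main obstacle is this last step: establishing those long exact sequences -- and in particular the injectivity of $\pi(\vp_i)\to\pi(S_i)$ and the precise identification of the connecting homomorphisms -- separately under each of the three hypotheses, which is where the structure theory of almost small maps and of maps of finite weak category (and, in characteristic zero, the cotangent complex) does the real work, and where the characteristic-$2$ subtleties of restricted graded Lie algebras must be addressed throughout. A secondary technical point is checking that the pullback $\pi(S_1)\times_{\pi(R)}\pi(S_2)$ of the statement is genuinely a graded Lie algebra (in the adjusted sense of the introduction) and that $\Theta$ lands in it; both follow from the exactness established above.
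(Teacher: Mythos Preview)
Your strategy diverges substantially from the paper's, and the place where it breaks down is precisely the step you flag as ``the main obstacle.'' You assert that under each of (1)--(3) one has a Jacobi--Zariski long exact sequence
\[
\cdots\to\pi^n(\vp_i)\to\pi^n(S_i)\to\pi^n(R)\to\pi^{n+1}(\vp_i)\to\cdots
\]
(and similarly for $\psi$), and that this is what the three hypotheses buy. That is correct for (1): in characteristic zero the identification of $\pi^{\geqslant 2}$ with shifted Andr\'e--Quillen cohomology delivers the sequence. It is \emph{not} correct, or at least not established, for (2) and (3). ``Almost small'' is by definition the surjectivity of $\pi^{\geqslant 2}(S_i)\to\pi^{\geqslant 2}(R)$; it is a conclusion one might hope to extract from such a sequence, not a hypothesis that manufactures one. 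Likewise ``finite weak category'' is a condition on the vanishing of products in the homology of truncated minimal models; there is no mechanism by which it yields a long exact sequence of homotopy Lie algebras in positive characteristic. So for (2) and (3) your argument has no engine: the splitting $\pi(\psi)\cong\pi(\vp_1)\times\pi(\vp_2)$ is fine, but you then have nothing exact to feed it into, and the five-lemma chase never gets off the ground.

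The paper's argument is organised entirely differently and never invokes such a sequence. It works directly with minimal models and the identification $\pi^{\geqslant 2}(A)\cong(\shift\,\ind_k k[X])^\vee$. The role of hypotheses (2) and (3) is to control the \emph{shape} of a minimal model of $S_i$ relative to one of $R$: when $\vp_1$ is almost small, a theorem from \cite{AlgebraRetracts} produces a minimal model $Q_1[X,W]\to S_1$ extending the minimal model $P[W]\to R$ with $w\mapsto w$; when $k$ has characteristic zero or $\vp_i$ has finite weak category, a pruning lemma (\cref{l:WeakCat}) produces minimal models $Q_i[U,X]$, $Q_j[V,Y]$ with $U,V\subseteq W$ and the map from $P[W]$ given by the evident projection on variables. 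In either case Tor-independence makes the tensor product over $P[W]$ a minimal model of $S$ over an appropriate regular quotient, and one reads off the pushout square of indecomposables directly; dualising gives the pullback of $\pi^{\geqslant 2}$'s, and \cref{l:pi1} handles degree one. No exact sequence of homotopy Lie algebras, no connecting maps, no five-lemma. Your degree-one argument and the identification $\F{\psi}\simeq\F{\vp_1}\otimes_k\F{\vp_2}$ are correct, but they do not carry the weight you place on them.
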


Under more restrictive hypotheses, the conclusion of \cref{introthm1} can be asserted from two results of Avramov: first, when $R$ is regular and a  further assumption is imposed on the kernels of the $\vp_i$  in \cite{tensor}, and second, when one of the $\vp_i$ is assumed to be small in \cite{small}. Both arguments require a careful analysis of certain spectral sequences while further relying on celebrated theorems of Andr\'{e}, Milnor--Moore, and Sj\"{o}din in \cite{Andre,MM,Sjodin}, respectively. Besides the greater generality of  \cref{introthm1}, its proof is direct, given by an examination of DG $R$-algebra resolutions of $S_1$, $S_2$ and $S$. We direct the reader to \cref{thm:SurjPiIsoAlmostSmall} and \cref{thm:surjPiIsochar0}, which in conjunction establish \cref{introthm1}; see also \cref{remark:cmonman}.

The paper is organized as follows.  \cref{sec:DG} recalls preliminaries regarding DG algebras, especially minimal models and homotopy Lie algebras.  In  \cref{sec:torind} we provide general consequences of Tor-independence  and shared properties of the projections $R \to S_i$ and $S_i \to S$ by examining the fibers on the induced maps on minimal models; see \cref{p:ci,p:parallelmaps,p:Koszul}.

 \cref{sec:AS,sec:char0} are the heart of the present article, as they  contain the proofs of the main result above. \cref{sec:applications} is the final section, which provides several applications of  \cref{introthm1} including results regarding the structure of stable cohomology modules, Tor algebras and  Andr\'{e}--Quillen cohomology modules   for Tor-independent maps; see \cref{rem:stable,cor:TorAlg,cor:AQ}, respectively.
An application in a different direction is a numerical relationship on the Poincar\'e series of $R$, $S_1,$ $S_2$ and $S$ which is presented in  \cref{cor:PSeriesAS}, and stated below.

\begin{introthm}
\label{introthm2}
Suppose $S_1\xla{\vp_1}R\xra{\vp_2} S_2$ is a pair of surjective local ring maps with residue field $k$ and  Tor-independent targets, and let $S=S_1\otimes_R S_2$. If at least one $\vp_i$ is almost small, then there is an equality of formal power series
\[\mathrm{P}^S_k(t)=\frac{\mathrm{P}^{S_1}_k(t)\cdot \mathrm{P}^{S_2}_k(t)}{\mathrm{P}^R_k(t)}\,.\]
\end{introthm}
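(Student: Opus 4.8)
The plan is to deduce the identity from \cref{introthm1} by a deviation count. For a local ring $A$ with residue field $k$ one has $\Ext_A^*(k,k)\cong U\pi(A)$ as graded Hopf algebras (see \cref{HomotopyLieAlgebra}), and the standard relation between the Poincar\'{e} series and the deviations $\varepsilon_n(A)\colonequals\dim_k\pi^n(A)$ — equivalently, the Poincar\'{e}--Birkhoff--Witt theorem applied to $\Ext_A^*(k,k)=U\pi(A)$ — yields
\[
\mathrm{P}^{A}_{k}(t)=\frac{\displaystyle\prod_{i\ge 1}\bigl(1+t^{2i-1}\bigr)^{\varepsilon_{2i-1}(A)}}{\displaystyle\prod_{i\ge 1}\bigl(1-t^{2i}\bigr)^{\varepsilon_{2i}(A)}}\,.
\]
Hence it is enough to prove the additive relation $\varepsilon_n(S)=\varepsilon_n(S_1)+\varepsilon_n(S_2)-\varepsilon_n(R)$ for every $n\ge 1$: substituting it into the displayed factorization for $S$ and splitting each exponent into its three summands regroups the product as $\mathrm{P}^{S_1}_{k}(t)\,\mathrm{P}^{S_2}_{k}(t)\cdot\mathrm{P}^{R}_{k}(t)^{-1}$, which is the asserted equality of formal power series.

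To obtain the additive relation, apply \cref{introthm1}: since one of the $\vp_i$ is almost small, there is an isomorphism of graded Lie algebras $\pi(S)\cong\pi(S_1)\times_{\pi(R)}\pi(S_2)$ formed along the natural maps $f_i\colon\pi(S_i)\to\pi(R)$. In each cohomological degree $n$ this fibre product is the kernel of $(f_1,-f_2)\colon\pi^n(S_1)\oplus\pi^n(S_2)\to\pi^n(R)$, so that
\[
\varepsilon_n(S)=\varepsilon_n(S_1)+\varepsilon_n(S_2)-\dim_k\bigl(\im f_1+\im f_2\bigr)\,,
\]
and the sought relation is equivalent to the joint surjectivity $\im f_1+\im f_2=\pi(R)$ in every degree. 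I would deduce this from the almost--small hypothesis, for an almost small surjection $R\to S_i$ induces a surjection $\pi(S_i)\twoheadrightarrow\pi(R)$ of graded Lie algebras; this is the counterpart, for the wider class, of the fact that Avramov's small maps \cite{small} are exactly the surjections $R\to S'$ for which $\Ext^*_{S'}(k,k)\to\Ext^*_R(k,k)$ — equivalently $\pi(S')\to\pi(R)$ — is onto. Surjectivity of just one of $f_1,f_2$ already forces $\im f_1+\im f_2=\pi(R)$.

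The main obstacle is exactly this joint surjectivity, which is the one place the almost--small hypothesis is indispensable. \cref{introthm1} keeps providing the Lie--algebra pullback for, say, an arbitrary Tor-independent pair of residual characteristic zero, but in that generality $\im f_1+\im f_2$ can be a proper subspace of $\pi(R)$ — this may already occur in degree $1$, dually to the failure of $\m/\m^2\to \m_{S_1}/\m_{S_1}^2\oplus\m_{S_2}/\m_{S_2}^2$ to be injective — and then $\varepsilon_n(S)>\varepsilon_n(S_1)+\varepsilon_n(S_2)-\varepsilon_n(R)$ for some $n$, so the product identity fails (it degenerates to a coefficientwise domination of $\mathrm{P}^{S}_{k}(t)$ over $\mathrm{P}^{S_1}_{k}(t)\,\mathrm{P}^{S_2}_{k}(t)\cdot\mathrm{P}^{R}_{k}(t)^{-1}$). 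A minor secondary point is to cite the Poincar\'{e}-series/deviation factorization in a characteristic-free form; this is classical (see the preliminaries), the only extra care being the divided-power refinement of Poincar\'{e}--Birkhoff--Witt in positive characteristic, which does not affect the graded-dimension bookkeeping.
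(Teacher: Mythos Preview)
Your overall strategy---reduce to the additive relation $\varepsilon_n(S)=\varepsilon_n(S_1)+\varepsilon_n(S_2)-\varepsilon_n(R)$ via the Lie-algebra pullback and the deviation factorization of Poincar\'e series---is exactly the paper's approach in \cref{cor:PSeriesAS}. The reduction to joint surjectivity of $(f_1,-f_2)$ is also correct.

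There is, however, a genuine slip in how you invoke the hypothesis. You assert that an almost small surjection induces a surjection $\pi(S_i)\twoheadrightarrow\pi(R)$ of graded Lie algebras. That is the definition of \emph{small}, not \emph{almost small}: by \cref{c:almostsmall}, almost small only guarantees surjectivity of $\pi^{\geqslant 2}(S_i)\to\pi^{\geqslant 2}(R)$. In degree~$1$ the map $\pi^1(S_i)\to\pi^1(R)$ is the $k$-dual of the surjection $\m_R/\m_R^2\twoheadrightarrow\m_{S_i}/\m_{S_i}^2$, hence injective and typically not surjective. So your argument for joint surjectivity breaks in degree~$1$.

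The fix is exactly what the paper does: treat degree~$1$ separately. Tor-independence forces $I_1\cap I_2=I_1I_2\subseteq\m_R^2$, and the proof of \cref{l:pi1} shows that then $(\overline{\vp}_1,-\overline{\vp}_2)\colon\m_R/\m_R^2\to\m_{S_1}/\m_{S_1}^2\oplus\m_{S_2}/\m_{S_2}^2$ is injective; dualizing gives the needed surjectivity of $(f_1,-f_2)$ in degree~$1$. This also means your diagnosis of the obstruction in the general (characteristic-zero or finite-weak-category) case is misplaced: degree~$1$ is never the problem under Tor-independence; the open issue in \cref{question} lives entirely in degrees~$\geqslant 2$, where neither $f_i$ need be surjective when no $\vp_i$ is almost small.
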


This theorem is of particular interest as it generalizes  results from \cite{tensor,small}, while its conclusion is only established under one of the three specified conditions in \cref{introthm1}; cf.\@ \cref{question}.


\begin{ack}
We thank Benjamin Briggs, Michael DeBellevue and Srikanth Iyengar for helpful conversations regarding this work. We thank an anonymous referee for pointing out a mistake in a previous version of the manuscript; our corrections have led to simplifications in several arguments, specifically in \cref{sec:char0}. Finally, the fifth author was partially supported by NSF grant DMS 2002173. 
\end{ack}

\section{DG algebras and homotopy Lie algebras}
\label{sec:DG}

Throughout this section, let $(R,\m,k)$ be a local ring and fix  a surjective local homomorphism $\vp\colon R\to S$. We recall the necessary background regarding certain semifree DG algebra resolutions as well as the homotopy Lie algebra;  suitable references include \cite{Asterisque,IFR,Briggs}.

By convention all DG algebras will be  nonnegatively graded, strictly graded commutative and local. That last condition means that for a DG algebra $A$, the base ring $A_0$ is a commutative noetherian local ring, which we assume also has residue field $k$, and each $\hh_i(A)$ is a finitely generated $\hh_0(A)$-module; throughout $A$ will denote such a DG algebra. We will use $\del$ to denote the differential of a complex.

\begin{chunk} A \emph{semifree extension} of $R$ is a DG $R$-algebra $R[X]$ where
$X=X_{\geqslant 1}$ is a graded set of variables consisting of exterior variables in each odd degree and polynomial variables in each even degree. 
By a slight abuse of notation we write $k[X]$ for the DG $k$-algebra $k\otimes_R R[X].$
We say $R[X]$ is a \emph{minimal semifree extension} of $R$ provided there is the containment $\del(k[X])\subseteq (X)^2.$
\end{chunk}

\begin{chunk}\label{MinModel}
A \emph{minimal model for $\vp$} is a semifree extension $R[X]$ fitting into a factorization of $\vp$ as $R\to R[X]\xra{\tilde{\vp}} S$ satisfying the following:
\begin{enumerate}
    \item $R[X]$ is a minimal semifree extension of $R$;
    \item $\tilde{\vp}\colon R[X]\to S$ is a surjective quasi-isomorphism.
\end{enumerate}  
Moreover, minimal models always exist and are unique up to homotopy equivalence of DG $R$-algebras;  see \cite[Section~7.2]{IFR} as well as \cite[Section~2.7]{Briggs}.
\end{chunk}

\begin{chunk}\label{c:acyclicclsoure}
 A \emph{semifree $\Gamma$-extension of }$A$ is a DG
algebra $A\langle Y\rangle$ where $Y=Y_{\geqslant 1}$ is a graded set of variables consisting of exterior variables in each odd degree and divided powers variables in each positive even degree; see, for example, \cite[Section~6.1]{IFR}. 
 
Assume there is a surjection $A\to S$ and let $J=\Ker(A\to S).$
 Following \cite[Construction~6.3.1]{IFR}, an  \emph{acyclic closure of $S$ over }$A$ is a semifree  $\Gamma$-extension $A\langle Y \rangle$ of $A$ with $\hh(A\langle Y\rangle)=S$ such that
\begin{enumerate}
\item $\del(Y_1)$ minimally generates $J_0\mod \del(A_1)$;
\item $\{\cls{\del(y)} \mid y\in Y_{n+1}\}$ minimally generates $\hh_n(A\langle Y_{\leqslant n} \rangle)$
for $n\geqslant 1$.
\end{enumerate}
Existence of an acyclic closure is the content of \cite[Proposition~1.9.3]{GL}, while uniqueness up to DG $\Gamma$-algebra isomorphism is established in \cite[Theorem~1.9.5]{GL}.
\end{chunk}

\begin{chunk}\label{Derivations}  Let $A\langle Y \rangle$ be a semifree  $\Gamma$-extension and  $M$ be a  DG $A\langle Y\rangle$-module. An $A$-linear map $d\colon A\langle Y\rangle\to M$ is called an
\emph{$\Gamma$-derivation} if it satisfies
\begin{enumerate}
\item $d(bb')=d(b)b'+(-1)^{|b||d|}bd(b')$ for all $b,b'\in A\langle Y\rangle$;
\item $d(y^{(i)})=d(y)y^{(i-1)}$ for all $y\in Y_{2i}$ and all $i\geqslant 1$.
\end{enumerate}
We denote the collection of $A$-linear $\Gamma$-derivations from $A\langle Y\rangle$ to $M$ by 
$\Der^\gamma_A(A\langle Y\rangle,M)$.
\end{chunk}

\begin{chunk}\label{HomotopyLieAlgebra}
A graded Lie algebra over $k$ is a graded $k$-vector space  equipped with a $k$-bilinear pairing, called its Lie bracket, and a squaring operation  satisfying a list of axioms specified in \cite[Remark~10.1.2]{IFR}.

The \emph{homotopy Lie algebra of $A$}, denoted by $\pi(A)$, is defined to be
\[
\pi(A)=\hh\left(\Der^\gamma_A(A\langle Y\rangle,A\langle Y\rangle)\right)\,,
\] 
where $A\langle Y\rangle$ is an acyclic closure of $k$ over $A$. The Lie bracket is the graded commutator and the square of an element is the composition of that element with itself. As any two acyclic closures are isomorphic as semifree $\Gamma$-extensions, see \cref{c:acyclicclsoure}, $\pi(A)$ is  independent of choice of acyclic closure for $k$ over $A$. 
Furthermore, $\pi(A)$ is naturally identified as a $k$-subspace of $\Ext_A(k,k)$ with the latter being the universal enveloping algebra of $\pi(A)$; see \cite[Theorem~10.2.1]{IFR}. Finally, given a quasi-isomorphism of local DG algebras $A\xra{\simeq} B$ it follows from \cite[Lemma~7.2.10]{IFR} that $\pi(B)\xra{\cong}\pi(A)$ as graded Lie algebras over $k$.

When $A$ is the local ring $(R,\m,k)$ and $R\langle Y\rangle$ is an acyclic closure of $k$ over $R$, it is  well-known that the quasi-isomorphism  
\[
R\langle Y\rangle \to \widehat{R}\otimes_R R\langle Y\rangle
\] induces an isomorphism of graded Lie algebras $\pi(\widehat{R})\xra{\cong} \pi(R)$, where $\widehat{R}$ denotes the $\m$-adic completion of $R$. 
Moreover, the surjective map $\vp\colon R\to S$ induces a map of graded Lie algebras over $k$:
\[
\pi(\vp)\colon \pi(S)\to \pi(R)\,.
\] See, for example, \cite[Remark~10.2.4]{IFR} or \cite{GL} for more details on the construction of this map. 
\end{chunk}

\begin{chunk} 
Let $B=R[X]$ be a semifree extension of $R$. Its indecomposable complex, denoted $\ind_R R[X]$, is the complex of free $R$-modules 
\[
\dfrac{B}{B_0+(X)^2}\cong \ldots \to RX_n\to RX_{n-1}\to \ldots \to RX_2\to RX_1\to 0\,.
\]
Minimality of $B$ is detected using $\ind_k k[X]$. Namely, $B$ is a minimal semifree extension of $R$ if and only if $\ind_k k[X]$ has trivial differential. 

On the other hand, given a semifree $\Gamma$-extension  $A\langle Y \rangle$ of $A$ with $S=\hh_0(A\langle Y\rangle)$, its complex of $\Gamma$-indecomposables (with respect to $A$), denoted  
$\ind_A^\gamma A\langle Y\rangle $, is the complex 
\[
\dfrac{A\langle Y\rangle}{A+JY+(Y)^{(2)}}\cong\ldots \to SY_n\to SY_{n-1}\to \ldots \to SY_2\to SY_1\to 0
\]
where   $J=\Ker(A\langle Y\rangle \to  S)$ and $(Y)^{(2)}$ consists of the DG ideal of $A\langle Y\rangle $  consisting of all decomposable $\Gamma$-monomials on the set $Y$; see \cite[Construction~6.2.5]{IFR} for more details. Finally, $A\langle Y\rangle$ is an acyclic closure of $S$ if and only if $\ind_A^\gamma A\langle Y\rangle$ is minimal as a complex of (free) $S$-modules; cf.\@ \cite[Lemma~6.3.2]{IFR}. 
\end{chunk}
 We write $\shift $ for the shift functor on a graded object. Namely, for a graded object $V$, $\shift V$ is the graded object with
$(\shift V)_n=V_{n-1}$.

\begin{chunk}\label{c:dualmaps}
Given any minimal semifree  extensions  $k[X]$ and $k[X']$ over $k$, a local DG algebra map $\psi\colon  k[X]\to k[X']$ induces a well-defined map on the graded $k$-vector spaces 
\begin{equation}\label{e:indmap}
\ind_k\psi\colon  \ind_k k[X]\to \ind_kk[X']. 
\end{equation} 

Consider the commutative diagram of local rings 
\begin{center}
\begin{tikzcd}
    P \ar{r}\ar{d} & Q \ar{d}\\
    R\ar[r,"\vp"] & S
\end{tikzcd}
\end{center}
where the vertical maps are minimal Cohen presentations, with $\vp$ from above. Fix minimal models $P[X]\xra{\simeq}R$ and $Q[X']\xra{\simeq} S$, and let $\tilde{\vp}\colon P[X]\to Q[X']$ be the map induced by $\vp$; cf. \cite[Proposition~2.1.9]{IFR}. Set $\psi$ to be the map 
\[
k\otimes \tilde{\vp}\colon k[X]\to k[X'].
\]
By \cite[Theorem~3.4]{AlgebraRetracts}, there is the following commutative diagram of graded $k$-spaces:
\begin{center}
    \begin{tikzcd}
        (\shift\ind_k k[X'])^\vee \ar["(\shift\ind_k \psi)^\vee"]{rr} \ar["\cong"]{d} & &  (\shift\ind_k k[X])^\vee\ar["\cong"]{d}\\
    \pi^{\geqslant 2}(S) \ar["\pi^{\geqslant 2}(\vp)"]{rr} &   & \pi^{\geqslant 2}(R)
    \end{tikzcd}
\end{center}
where $(-)^\vee$ denotes graded $k$-space duality. In particular, we conclude that $\pi^{\geqslant 2}(\vp)$ is exactly the $k$-linear dual of $\ind_k\psi$, defined in \cref{e:indmap}, up to a shift.
\end{chunk}

\begin{chunk}\label{c:fibers}
We write $\F{\vp}$ for the (derived) fiber of $\vp$. Namely, $\F{\vp}$ is the DG $k$-algebra $k\otimes_R R[X]$ where $R[X]$ is a minimal model of $\vp$; it is clear that $\F{\vp}\simeq k\lotimes_R S,$ justifying the terminology. As any two minimal models are isomorphic, $\F{\vp}$ is an invariant of $\vp$.
In the construction above, if one replaces the minimal model $R[X]$ with an acyclic closure $R\langle Y\rangle$ for $\vp$ one obtains a semifree $\Gamma$-extension $k\otimes_R R\langle Y\rangle$, which will be denoted by $\F{\vp}_\gamma.$ 
\end{chunk}

\section{Tor-independent quotients}\label{sec:torind}
Throughout the rest of the article we fix the following notation. 
\begin{notation}\label{eq:MinimalIntersectionDiagramAS}
Consider the following diagram of surjective  local homomorphisms with common residue field $k$:
\begin{center}
\begin{tikzpicture}[baseline=(current  bounding  box.center)]
 \matrix (m) [matrix of math nodes,row sep=3em,column sep=4em,minimum width=2em] {
 R&S_1\\
 S_2&S\\};
 \path[->>] (m-1-1) edge node[above]{$\vp_1$} (m-1-2);
 \path[->>] (m-1-1) edge node[above]{$\vp$} (m-2-2);
 \path[->>] (m-1-1) edge node[left]{$\vp_2$} (m-2-1);
 \path[->>] (m-1-2) edge node[right]{$\psi_2$}(m-2-2);
 \path[->>] (m-2-1) edge node[below]{$\psi_1$} (m-2-2);
 \end{tikzpicture}
 \end{center} 
We denote by  $\m_R$ the maximal ideal of $R$ and assume that the kernel of $\vp_i$, denoted by $I_i$, is a  nontrivial proper ideal of $R$.
\end{notation}

\begin{remark} By a slight abuse of terminology, we will refer to the pair of maps $\vp_1,\vp_2$ as being Tor-independent provided their targets are Tor-independent over $R$. 
This is equivalent to requiring that $S$ is both the ordinary and derived pushout in the category of $R$-algebras  along the maps $\vp_1$ and $\vp_2$; the derived pushout condition simply says that $
S\simeq S_1\lotimes_R S_2.$
\end{remark}

\begin{chunk}\label{ch:history}
When $R$ is regular,  $\vp_1,\vp_2$ are  Tor-independent  if and only if $I_1I_2=I_1\cap I_2$. In this case, if each of the $I_i$ are contained in $\m_R^2$, the ring $S$ is  referred to as a minimal intersection ring  in  \cite{JM}. In \cite[Definition 5.5.1]{Vasconcelos}, the ideals $I_1$ and $I_2$  are called \emph{transversal}; such ideals, and their corresponding surjective ring maps, have been studied in \emph{loc.\@ cit.\@} as well as  \cite{tensor,small,Johnson,Keller}. 
\end{chunk}

\begin{example}\label{ex:ci}
Anytime $I_1$ is generated by an $R$-regular sequence that is also  $S_2$-regular  the pair $\vp_1,\vp_2$ are Tor-independent. In light of this point, Tor-independent maps generalize complete intersection maps of codimension at least two. 
\end{example}

\begin{lemma}\label{rem:DimCount}
Whenever $I_1\cap I_2\subseteq\m_R^2$, there is the equality
\[
\dim_k \dfrac{\m_S}{\m_S^2}=\dim_k \dfrac{\m_{S_1}}{\m_{S_1}^2}+\dim_k \dfrac{\m_{S_2}}{\m_{S_2}^2}-\dim_k \dfrac{\m_{R}}{\m_{R}^2}\,.
\]
 \end{lemma}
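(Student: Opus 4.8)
The plan is to compute each of the four embedding-dimension quantities in terms of the data of the ring $R$ and the ideals $I_1, I_2$, and then verify the claimed equality as an identity among these expressions. Recall that for a surjective local map with kernel $I$, the target $S_i = R/I_i$ has $\m_{S_i}/\m_{S_i}^2 \cong \m_R/(\m_R^2 + I_i)$, so that
\[
\dim_k \frac{\m_{S_i}}{\m_{S_i}^2} = \dim_k \frac{\m_R}{\m_R^2} - \dim_k \frac{\m_R^2 + I_i}{\m_R^2} = \dim_k \frac{\m_R}{\m_R^2} - \dim_k \frac{I_i}{I_i \cap \m_R^2}\,.
\]
Writing $\nu(-)$ for the minimal number of generators and $n_i := \dim_k \tfrac{I_i + \m_R^2}{\m_R^2} = \dim_k \tfrac{I_i}{I_i \cap \m_R^2}$ for the number of minimal generators of $I_i$ that lie outside $\m_R^2$, the above reads $\dim_k \m_{S_i}/\m_{S_i}^2 = \dim_k \m_R/\m_R^2 - n_i$. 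Since $S = R/(I_1+I_2)$, the same formula gives $\dim_k \m_S/\m_S^2 = \dim_k \m_R/\m_R^2 - \dim_k \tfrac{I_1+I_2+\m_R^2}{\m_R^2}$.

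With these formulas in hand, after substituting into the asserted equality and cancelling, the claim reduces to the purely linear-algebraic statement
\[
\dim_k \frac{I_1 + I_2 + \m_R^2}{\m_R^2} = \dim_k \frac{I_1 + \m_R^2}{\m_R^2} + \dim_k \frac{I_2 + \m_R^2}{\m_R^2}\,,
\]
i.e. that the images of $I_1$ and $I_2$ in the $k$-vector space $\m_R/\m_R^2$ are linearly disjoint (have zero intersection). This is exactly where the hypothesis $I_1 \cap I_2 \subseteq \m_R^2$ enters: if $\bar x \in \tfrac{I_1+\m_R^2}{\m_R^2} \cap \tfrac{I_2+\m_R^2}{\m_R^2}$, lift to $x = a_1 + m_1 = a_2 + m_2$ with $a_j \in I_j$, $m_j \in \m_R^2$; then $a_1 - a_2 = m_2 - m_1 \in \m_R^2$, and I want to conclude $x \in \m_R^2$. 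The cleanest way is to observe that modulo $\m_R^2$ the element $\bar x$ is represented by $\bar a_1 = \bar a_2$, so $a_1 \equiv a_2 \pmod{\m_R^2}$; replacing $a_2$ by $a_1$ changes it only by an element of $\m_R^2 \cap I_2 \subseteq \m_R^2$, hence one may assume the common lift $x_0 \in I_1$ satisfies $x_0 - a_2 \in \m_R^2 \cap \m_R = \m_R$... I should instead argue directly: from $a_1 - a_2 \in \m_R^2$ and $a_2 \in I_2$ we get $a_1 \in I_2 + \m_R^2$, so $a_1 \in (I_1 \cap (I_2 + \m_R^2))$; it remains to check $I_1 \cap (I_2 + \m_R^2) \subseteq I_1\cap I_2 + \m_R^2 \subseteq \m_R^2$ using the hypothesis, which holds because any $a_1 = a_2' + m$ with $a_1 \in I_1$, $a_2' \in I_2$, $m \in \m_R^2$ yields $a_1 - m = a_2' \in I_1 \cap I_2 \subseteq \m_R^2$ once we know $a_1 - m \in I_1$ — wait, that needs $m \in I_1$, which is not given. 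The correct and standard fact is simply: $I_1 \cap (I_2 + \m_R^2) = (I_1 \cap I_2) + (I_1 \cap \m_R^2)$ is \emph{not} generally true, so I will instead phrase the reduction as the statement that the natural map $\tfrac{I_1}{I_1 \cap \m_R^2} \oplus \tfrac{I_2}{I_2 \cap \m_R^2} \to \tfrac{I_1 + I_2}{(I_1+I_2)\cap \m_R^2}$ is an isomorphism, prove surjectivity trivially, and prove injectivity using $I_1 \cap I_2 \subseteq \m_R^2$: if $a_1 + a_2 \in \m_R^2$ with $a_j \in I_j$, then $a_1 = -a_2 + (\text{element of }\m_R^2)$ shows $a_1 \in I_2 + \m_R^2$...

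So the main obstacle is this last injectivity step, and the right way to finish it is: suppose $a_1 \in I_1$ and $a_2 \in I_2$ with $a_1 + a_2 \in \m_R^2$. I want $a_1 \in \m_R^2$ and $a_2 \in \m_R^2$. Now $a_1 = (a_1 + a_2) - a_2$, and I claim $a_1 \in I_1 \cap I_2 + \m_R^2$ is false in general; instead note that the hypothesis $I_1\cap I_2 \subseteq \m_R^2$ combined with $I_1 I_2 = I_1 \cap I_2$ (automatic when the $\vp_i$ are Tor-independent and, more to the point, we may invoke that $\m_R(I_1 \cap I_2) \subseteq \m_R^2$ trivially) — honestly the genuinely clean argument is: the images $V_i := \tfrac{I_i + \m_R^2}{\m_R^2} \subseteq \m_R/\m_R^2$ satisfy $V_1 \cap V_2 = \tfrac{(I_1 + \m_R^2) \cap (I_2 + \m_R^2)}{\m_R^2}$, and I must show $(I_1 + \m_R^2)\cap(I_2+\m_R^2) = \m_R^2$. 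Take $y$ in the intersection: $y = a_1 + m_1 = a_2 + m_2$. Then $a_1 - a_2 \in \m_R^2$. Also $a_1 \in I_1$ and $a_1 = a_2 + (m_2 - m_1)$, so $a_1 \in I_2 + \m_R^2$; thus $a_1 \in I_1 \cap (I_2 + \m_R^2)$. Finally I use the hypothesis in the form: by Tor-independence (via the equivalent pushout description, or directly from \cref{ch:history} when $R$ is regular, but in general one cites that $I_1 \cap I_2 = I_1 I_2$ need not hold — however the \emph{lemma's own hypothesis} $I_1 \cap I_2 \subseteq \m_R^2$ is what we use) one has $I_1 \cap (I_2 + \m_R^2) \subseteq I_1 \cap I_2 + \m_R^2$; indeed this containment does hold because... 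I will present it as: write $a_1 = a_2' + m$, $a_2' \in I_2$, $m \in \m_R^2$; then $a_2' = a_1 - m$, and since $a_1 \in \m_R$ and $m \in \m_R^2 \subseteq \m_R$ we get $a_2' \in \m_R$, but that alone is not enough. The resolution is that this containment $I_1 \cap (I_2+\m_R^2) \subseteq (I_1 \cap I_2) + \m_R^2$ is simply \emph{not} a formal triviality and must be extracted from Tor-independence via the long exact sequence or Nakayama; in the write-up I will therefore prove it by the following Artin–Rees-free argument valid under Tor-independence: the map $\Tor_1^R(S_1, R/\m_R^2) \to \ldots$ — actually the slick proof is to reduce modulo $\m_R^2$ and note $S_1 \otimes_R S_2 \otimes_R R/\m_R^2$-level comparison. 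I will write this step carefully in the final proof; at the sketch level, the plan is: (1) reduce via the standard $\m_{S_i}/\m_{S_i}^2$ formulas to showing $V_1 \cap V_2 = 0$ inside $\m_R/\m_R^2$; (2) prove $V_1 \cap V_2 = 0$ from the hypothesis $I_1 \cap I_2 \subseteq \m_R^2$ together with Tor-independence (the key input being that $I_1 \cap (I_2 + \m_R^2) = (I_1\cap I_2) + (I_1 \cap \m_R^2)$ holds up to what is needed, proved by tensoring the defining short exact sequences with $R/\m_R^2$ and using $\Tor^R_1(S_1,S_2)=0$); and I expect step (2) — reconciling the set-theoretic intersection of ideals with their images in $\m_R/\m_R^2$ — to be the only real content, everything else being bookkeeping.
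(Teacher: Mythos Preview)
Your reduction is correct and coincides with the paper's approach: writing $V_i = (I_i+\m_R^2)/\m_R^2 \subseteq \m_R/\m_R^2$, both you and the paper reduce the equality to the assertion $V_1 \cap V_2 = 0$ (in the paper's notation, $\im\,\bar\iota_1 \cap \im\,\bar\iota_2 = 0$), which the paper simply asserts as a consequence of $I_1\cap I_2 \subseteq \m_R^2$. Your instinct that something is missing is correct: the implication is false. Take $R=k[[x,y]]$, $I_1=(x)$, $I_2=(x+y^2)$. Then $I_1\cap I_2 = (x(x+y^2)) \subseteq \m_R^2$, yet $V_1 = V_2 = k\bar x$, so $V_1 \cap V_2 \neq 0$; and indeed $\dim_k\m_S/\m_S^2 = 1$ (here $S \cong k[y]/(y^2)$) while the right-hand side of the asserted equality is $1+1-2=0$. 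Thus the lemma as stated is false, and the paper's proof has precisely the gap you could not close.

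Your proposed repair via Tor-independence cannot work either: in the same example $S_1$ and $S_2$ are Tor-independent over $R$ (tensoring $0\to R\xrightarrow{x}R\to S_1\to 0$ with $S_2\cong k[[y]]$ gives multiplication by $-y^2$, which is injective), so no $\Tor$-vanishing hypothesis forces $V_1\cap V_2=0$. What is genuinely required is the stronger condition $(I_1+\m_R^2)\cap(I_2+\m_R^2)=\m_R^2$, which is $V_1\cap V_2=0$ itself. (Incidentally, the downstream statement \cref{l:pi1} survives: a direct count using $\ker(\bar\vp_1,-\bar\vp_2)=V_1\cap V_2$ and $\dim(V_1+V_2)=\dim V_1+\dim V_2-\dim(V_1\cap V_2)$ shows the pushout along $\bar\vp_1,\bar\vp_2$ always has dimension $\dim_k\m_S/\m_S^2$, with no hypothesis on $I_1\cap I_2$ needed, so the error here does not propagate.)
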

\begin{proof} Let $I$ be $I_1+I_2$ and  
\[
\bar\iota\colon \dfrac{I}{\m_RI}\longrightarrow\dfrac{\m_R}{\m_R^2}
\]
be the map induced by the inclusion $I \subseteq \m_R$; its codimension is exactly the embedding dimension of $S$, i.e., 
\begin{equation}\label{eq:linalg}
\dim_k \dfrac{\m_S}{\m_S^2}=\dim_k \dfrac{\m_R}{\m_R^2}-{\rm rank}\, \bar \iota\,.
\end{equation}
We define $\bar\iota_1$ and $\bar\iota_2$ similarly and note the analogous equalities to \cref{eq:linalg} also hold.

From the commutative diagram 
\begin{center}
    \begin{tikzcd}
    \dfrac{I}{\m_R I} \ar[r,"\bar\iota"] \ar[d,swap,"\cong"] & \dfrac{\m_R}{\m_R^2} \\
    \dfrac{I_1}{\m_R I_1} \oplus \dfrac{I_2}{\m_R I_2}  \ar[ru,swap,"\bar\iota_1+\bar\iota_2"]
    \end{tikzcd}
\end{center} and the fact that $\im \,\bar\iota_1$ and $\im\, \bar\iota_2$  intersect trivially in $\m_R/\m_R^2$, it follows that 
\begin{equation}\label{eq:additive}
    {\rm rank}\, \bar\iota={\rm rank}\, \bar\iota_1+{\rm rank}\, \bar\iota_2\,;
\end{equation}
 the vertical isomorphism in the diagram and the fact that $\im \,\bar\iota_1$ and $\im\, \bar\iota_2$  intersect at 0 are the only times the hypothesis $I_1\cap I_2\subseteq\m_R^2$ is used. Now substituting \cref{eq:additive} into \cref{eq:linalg} and using the analogs to \cref{eq:linalg} for  $\bar\iota_1$ and $\bar\iota_2$, the desired result follows readily. 
\end{proof}

The following lemma will be applied in  \cref{thm:SurjPiIsoAlmostSmall,thm:surjPiIsochar0}. It is likely well known but we include the proof here for convenience of the reader. The hypothesis of \cref{l:pi1} is satisfied if $\vp_1,\vp_2$ are Tor-independent, since $I_1\cap I_2=I_1I_2\subseteq \m_R^2$ is forced by the hypothesis that $\Tor^R_1(S_1,S_2)=0.$

\begin{lemma}
\label{l:pi1}
Adopting \cref{eq:MinimalIntersectionDiagramAS}, if $I_1\cap I_2\subseteq \m^2_R$, then there is the following isomorphism of $k$-vector spaces
\[
\pi^1(S) \cong \pi^1(S_1) \times_{\pi^1(R)} \pi^1(S_2)\,.
\]
\end{lemma}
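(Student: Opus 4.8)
The plan is to interpret $\pi^1$ of a local ring concretely as the $k$-dual of $\m/\m^2$, reduce everything to a statement about cotangent spaces, and then invoke \cref{rem:DimCount} together with a description of the maps $\pi^1(\vp_i)$. Recall that for a local ring $(T,\m_T,k)$ one has $\pi^1(T)\cong (\m_T/\m_T^2)^\vee$ as graded $k$-vector spaces, since in an acyclic closure $T\langle Y\rangle$ the degree-one variables $Y_1$ are dual to a minimal generating set of $\m_T$ and $\Der^\gamma$ in internal degree $1$ simply records where those variables go. Moreover, under this identification, for a surjection $\vp\colon R\to T$ the map $\pi^1(\vp)\colon \pi^1(T)\to\pi^1(R)$ is the $k$-dual of the canonical surjection $\m_R/\m_R^2\to\m_T/\m_T^2$. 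So the first step is to record these two facts (they are standard; I would cite \cref{HomotopyLieAlgebra}, \cref{c:dualmaps}, or the references \cite{IFR,GL}, and note the degree-one case is elementary).

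Next I would translate the claimed pullback. Dualizing is exact, so $\pi^1(S_1)\times_{\pi^1(R)}\pi^1(S_2)$, being the pullback along the two dual surjections, is the $k$-dual of the \emph{pushout} $\m_{S_1}/\m_{S_1}^2 \oplus_{\m_R/\m_R^2}\m_{S_2}/\m_{S_2}^2$, i.e.\ the cokernel of the map $\m_R/\m_R^2\to (\m_{S_1}/\m_{S_1}^2)\oplus(\m_{S_2}/\m_{S_2}^2)$ sending $\bar x\mapsto (\bar x,-\bar x)$. Thus it suffices to show that the natural map
\[
\coker\!\left(\dfrac{\m_R}{\m_R^2}\to \dfrac{\m_{S_1}}{\m_{S_1}^2}\oplus\dfrac{\m_{S_2}}{\m_{S_2}^2}\right)\xra{\ \cong\ }\dfrac{\m_S}{\m_S^2}
\]
is an isomorphism, where the map to $\m_S/\m_S^2$ is induced by $\psi_1$ and $-\psi_2$ (equivalently, by $\vp$ in each coordinate). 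Surjectivity is automatic since $\m_S$ is generated by the images of $\m_{S_1}$ (already by the images of $\m_R$), and the kernel of $\m_R/\m_R^2\to\m_{S_i}/\m_{S_i}^2$ is the image of $I_i$. A short diagram chase — essentially the one already run in the proof of \cref{rem:DimCount} using that $\im\bar\iota_1$ and $\im\bar\iota_2$ meet trivially in $\m_R/\m_R^2$, which is exactly where $I_1\cap I_2\subseteq\m_R^2$ enters — identifies the kernel of the cokernel map with $(I_1+I_2)/\m_R(I_1+I_2)$ mapping to $0$, giving injectivity. Alternatively, and perhaps more cleanly, I would simply compare dimensions: the cokernel above has dimension $\dim_k\m_{S_1}/\m_{S_1}^2+\dim_k\m_{S_2}/\m_{S_2}^2-\operatorname{rank}(\m_R/\m_R^2\to\text{sum})$, and since $\im\bar\iota_1\cap\im\bar\iota_2=0$ the map $\m_R/\m_R^2\to(\m_{S_1}/\m_{S_1}^2)\oplus(\m_{S_2}/\m_{S_2}^2)$ is injective, so that rank equals $\dim_k\m_R/\m_R^2$; then \cref{rem:DimCount} shows the cokernel and $\m_S/\m_S^2$ have the same dimension, and a surjection of finite-dimensional $k$-spaces of equal dimension is an isomorphism.

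Finally I would assemble the pieces: dualize the displayed isomorphism of cotangent-type spaces to get $\pi^1(S)\cong(\text{cokernel})^\vee\cong \pi^1(S_1)\times_{\pi^1(R)}\pi^1(S_2)$, and check naturality, i.e.\ that this isomorphism is compatible with the maps $\pi^1(\psi_i)$ and $\pi^1(\vp_i)$ — which is immediate from the dual description since every map in sight is a $k$-dual of a map of cotangent spaces and the diagram of cotangent spaces commutes by functoriality. I do not expect any serious obstacle here: the only subtlety is the bookkeeping around signs/orientations in the pushout-versus-pullback duality and making sure the hypothesis $I_1\cap I_2\subseteq\m_R^2$ is invoked precisely where the two images in $\m_R/\m_R^2$ must intersect trivially. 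The main (mild) point to get right is the identification $\pi^1(\vp_i) = (\m_R/\m_R^2\twoheadrightarrow \m_{S_i}/\m_{S_i}^2)^\vee$ and that $\operatorname{rank}\bar\iota_i = \dim_k I_i/(\m_R I_i + (I_i\cap\m_R^2))$ is, under the standing hypothesis, just $\dim_k(I_i+\m_R^2)/\m_R^2$, so that everything matches the count in \cref{rem:DimCount}.
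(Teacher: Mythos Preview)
Your proposal is correct and follows essentially the same route as the paper's own proof: identify $\pi^1$ with the $k$-dual of the cotangent space, reduce the pullback statement to a pushout of cotangent spaces, observe the comparison map to $\m_S/\m_S^2$ is surjective, and finish by the dimension count from \cref{rem:DimCount} together with the injectivity of $(\overline{\vp}_1,-\overline{\vp}_2)$, which is exactly where $I_1\cap I_2\subseteq\m_R^2$ is used. The paper does precisely this, in the same order.
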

\begin{proof}
Recall that for a local ring $(A,\m_A,k)$ there is the following natural isomorphism of $k$-spaces
\[
\pi^1(A)\cong \Hom_k\left(\frac{\m_A}{\m_A^2},k\right).
\]
Hence, by $k$-vector space duality it suffices to show the following diagram
\begin{equation*}
\begin{tikzpicture}[baseline=(current  bounding  box.center)]
 \matrix (m) [matrix of math nodes,row sep=3em,column sep=3.5em,minimum width=2em] {
 \dfrac{\m_R}{\m_R^2}&\dfrac{\m_{S_1}}{\m_{S_1}^2}\\
 \dfrac{\m_{S_2}}{\m_{S_2}^2}&\dfrac{\m_S}{\m_S^2}\\};
 \path[->>] (m-1-1) edge  node[above]{$\overline{\vp}_1$} (m-1-2);
 \path[->>] (m-1-1) edge  node[left]{$\overline{\vp}_2$}(m-2-1);
 \path[->>] (m-1-2) edge (m-2-2);
 \path[->>] (m-2-1) edge (m-2-2);
 \end{tikzpicture}
 \end{equation*}
is a pushout diagram of $k$-vector spaces. Let $P$ denote the pushout along $\overline{\vp}_1$ and $\overline{\vp}_2$. From the commutative diagram above the uniquely induced map $\psi\colon  P\to \m_S/\m_S^2$ is surjective; so it suffices to show their $k$-space dimensions coincide. 
From the assumption $I_1\cap I_2\subseteq \m_R^2$, it follows, by \Cref{rem:DimCount}, that 
\[
\dim_k \dfrac{\m_S}{\m_S^2}=\dim_k \dfrac{\m_{S_1}}{\m_{S_1}^2}+\dim_k \dfrac{\m_{S_2}}{\m_{S_2}^2}-\dim_k \dfrac{\m_{R}}{\m_{R}^2}.
\]

Moreover, 
\[
\dim_k P=\dim_k\dfrac{\m_{S_1}}{\m_{S_1}^2}+\dim_k \dfrac{\m_{S_2}}{\m_{S_2}^2}-\dim_k\left\{(\overline{\vp}_1(x),-\overline{\vp}_2(x))\colon   x\in \dfrac{\m_R}{\m_R^2}\right\}
\] and so to complete the proof it is enough to show 
\[
\dim_k\left\{(\overline{\vp}_1(x),-\overline{\vp}_2(x))\colon   x\in \dfrac{\m_R}{\m_R^2}\right\}=\dim_k\dfrac{\m_R}{\m_R^2}\,.
\]
The left-hand side is exactly the dimension of the image of the following surjective map
\[
\dfrac{\m_R}{\m_R^2}\xra{(\overline{\vp}_1,-\overline{\vp}_2)} \dfrac{\m_{S_1}}{\m_{S_1}^2}\oplus \dfrac{\m_{S_2}}{\m_{S_2}^2}\,;
\] note that $x$ is in the kernel of $(\overline{\vp}_1,-\overline{\vp}_2)$ if and only if $x=y+\m_R^2$ for some $y\in I_1\cap I_2$, and so the assumption that $I_1\cap I_2\subseteq \m_R^2$ implies $(\overline{\vp}_1,-\overline{\vp}_2$) is injective, as needed. 
\end{proof}

The rest of this section provides results  illustrating how  properties of $\vp$ determine and are determined by properties of $\vp_1$ and $\vp_2$, provided the latter pair are Tor-independent. We also show that in this setting $\vp_i$ and $\psi_i$ exhibit  properties simultaneously.

Recall the notions of  Golod, Gorenstein, and quasi-complete intersection homomorphisms discussed in \cite{Golod,AvramovFoxby,qci}, respectively, the last two generalizing the notions of Gorenstein and complete intersection local rings. Parts (1) and (2) of the next result generalize their ring theoretic versions in \cite[Proposition~3.3]{JM} with  simpler proofs not relying on the celebrated \emph{(new) intersection theorem}~\cite{PS,Roberts}.
\begin{proposition}\label{p:ci}
Suppose $\vp_i\colon R\to S_i$ is a pair of Tor-independent surjective local ring maps for $i=1,2$, and set $\vp=\vp_1\otimes \vp_2.$
\begin{enumerate}
    \item $\vp$ is Gorenstein if and only if each $\vp_i$ is;
    \item \label{p:ciCI} $\vp$ is  complete intersection if and only if each $\vp_i$ is;
    \item\label{p:ciQCI} more generally, $\vp$ is  quasi-complete intersection if and only if each $\vp_i$ is.
\end{enumerate}
\end{proposition}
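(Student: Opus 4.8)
The key is to understand the derived fiber $\F{\vp}$ of the composite $\vp = \vp_1 \otimes \vp_2$ in terms of the fibers $\F{\vp_i}$. Since $S_1, S_2$ are Tor-independent over $R$, we have $S \simeq S_1 \lotimes_R S_2$, and base-changing along $R \to k$ gives $k \lotimes_R S \simeq (k\lotimes_R S_1) \lotimes_k (k \lotimes_R S_2)$. Choosing minimal models $R[X_1] \xrightarrow{\simeq} S_1$ and $R[X_2]\xrightarrow{\simeq} S_2$, Tor-independence lets one check that $R[X_1] \otimes_R R[X_2]$ is a (minimal) model for $\vp$, so that
\[
\F{\vp} \;\simeq\; \F{\vp_1} \otimes_k \F{\vp_2}
\]
as DG $k$-algebras, and similarly at the level of acyclic closures one gets $\F{\vp}_\gamma \simeq \F{\vp_1}_\gamma \otimes_k \F{\vp_2}_\gamma$ with a corresponding identification of $\Gamma$-indecomposables. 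This should already be recorded in, or be an immediate consequence of, the structural results of \cref{sec:torind} (the models of $S_1 \to S$ and $R \to S_2$ being ``parallel''); I would invoke \cref{p:parallelmaps} and \cref{p:Koszul} here rather than reprove it.

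**Reduction via the homotopy fiber characterizations.** Each of the three properties—Gorenstein, complete intersection, quasi-complete intersection—has a known characterization of $\vp$ in terms of $\F{\vp}$: $\vp$ is complete intersection iff $\F{\vp}$ is a Koszul complex on a finite exterior algebra (equivalently $\pi^{\geqslant 2}(\F{\vp}) = 0$, i.e. $\F{\vp}$ has a minimal model with variables only in degree $1$); $\vp$ is quasi-complete intersection iff $\F{\vp}$ has the homotopy type of an exterior-plus-divided-power algebra on finitely many generators concentrated so that $\pi^{\geqslant 3}(\vp) = 0$, equivalently $\F{\vp}_\gamma$ has trivial differential on $\Gamma$-indecomposables beyond the Koszul part—concretely, $\vp$ qci means the deviations $\ve_i(\vp)$ vanish for $i \geqslant 3$; and $\vp$ is Gorenstein iff $\Ext_R^*(S,R)$ is cyclic as a module over itself in top degree, which (for surjective $\vp$) is equivalent to $\F{\vp}$ being a Poincaré duality DG algebra, or to $\operatorname{Ext}_{\F{\vp}}(k,k)$ having the appropriate finiteness. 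Granting $\F{\vp} \simeq \F{\vp_1} \otimes_k \F{\vp_2}$, each property transfers across the tensor product: deviations are additive, $\ve_i(\vp) = \ve_i(\vp_1) + \ve_i(\vp_2)$, so $\ve_i(\vp) = 0$ for all $i$ (resp. $i \geqslant 2$, $i \geqslant 3$) iff the same holds for each $\vp_j$—this handles (2) and (3) at once. For (1), a tensor product of DG $k$-algebras over a field is Poincaré duality iff each factor is, and one identifies the socle/canonical-module condition on $\vp$ with that on the fiber; here I would cite the fiber characterization of Gorenstein homomorphisms from \cite{AvramovFoxby}.

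**Assembling the argument.** So the proof is: (i) establish $\F{\vp} \simeq \F{\vp_1}\otimes_k \F{\vp_2}$ (and the $\Gamma$-version), citing the parallel-maps machinery of \cref{sec:torind}; (ii) recall the fiber-theoretic characterizations of Gorenstein, complete intersection, and quasi-complete intersection surjections; (iii) observe each is stable under, and detected by, tensoring over $k$—using additivity of deviations for (2), (3) and the Poincaré-duality tensor lemma for (1). One direction (if each $\vp_i$ has the property then $\vp$ does) and its converse both follow from the ``iff'' in step (iii).

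**Main obstacle.** The technical heart is step (i): verifying that the Tor-independent tensor product of minimal models is again a minimal model for $\vp$, and in particular that tensoring acyclic closures over $R$ computes the acyclic closure (equivalently, that no collapsing of deviations occurs). This is exactly where Tor-independence is essential—without it $\Tor^R(S_1,S_2)$ contributes extra homology and the tensor product of resolutions is not a resolution of $S$. For the Gorenstein case there is the additional subtlety that one works with finite-dimensional fibers and must match up the top nonvanishing Ext (the canonical module of the fiber) under the Künneth isomorphism; this is routine over a field but should be stated carefully. Everything downstream—(2) and (3)—is then a clean bookkeeping argument with deviations.
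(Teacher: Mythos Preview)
Your approach is essentially the paper's: establish $\F{\vp}\cong\F{\vp_1}\otimes_k\F{\vp_2}$ (and its $\Gamma$-analogue) from Tor-independence of the $S_i$, then read off each property from the fiber---Gorenstein via the K\"unneth isomorphism for $\Ext_{\F{\vp}}(k,\F{\vp})$ and \cite{AvramovFoxby}, quasi-complete intersection via the $\Gamma$-indecomposables splitting as a direct sum and hence being concentrated in degrees $1$ and $2$ iff each summand is. The paper deduces (2) from (3) together with the elementary observation that $\pd_R S<\infty$ iff each $\pd_R S_i<\infty$ (and then \cite[Theorem~2.5]{qci}), rather than via vanishing of $\ve_{\geqslant 2}$ directly, but your route works too.

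One organizational correction: you cannot invoke \cref{p:parallelmaps} or \cref{p:Koszul} for the tensor decomposition of the fiber, since those results come \emph{after} \cref{p:ci} in the paper and themselves rely on the same decomposition; the isomorphism $\F{\vp}\cong\F{\vp_1}\otimes_k\F{\vp_2}$ (and that $A_1\otimes_R A_2$ is an acyclic closure of $\vp$) must be established here directly, exactly as you already sketch via tensoring minimal models and acyclic closures.
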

\begin{proof} First, we point out that $S$ has finite projective dimension over $R$ if and only if each $S_i$ does since $\vp_1,\vp_2$ are Tor-independent. From this, \cref{p:ciCI} follows from \cref{p:ciQCI}; see \cite[Theorem~2.5]{qci}.

For the first statement, observe that there is the following isomorphism of DG $k$-algebras
\[
\F{\vp}\cong \F{\vp_1}\otimes_k \F{\vp_2}\,.
\]From this it follows that the homology of $\F{\vp}$ is finite dimensional if and only if each $\F{\vp_i}$ has finite dimensional homology. 
Now a calculation involving $k$-duality and the K\"{u}nneth formula yields the following isomorphism of graded $k$-spaces
\[
 \Ext_{\F{\vp}}(k,\F{\vp})\cong \Ext_{\F{\vp_1}}(k,\F{\vp_1})\otimes_k \Ext_{\F{\vp_2}}(k,\F{\vp_2})\,
\]  provided the homology of $\F{\vp}$ is finite. Therefore the desired result holds from \cite[Theorem~4.4]{AvramovFoxby}, and the observation at the beginning of the proof.

The Tor-independence of $\vp_1,\vp_2$ implies that given acyclic closures  $A_i\xra{\simeq}S_i$ for $\vp_i$, the DG $R$-algebra $A\colonequals A_1\otimes_R A_2$ is an acyclic closure for $\vp$.  As a direct consequence we obtain the following isomorphism of $k$-spaces 
\[
\ind_k^\gamma (A_1\otimes_Rk)\oplus \ind_k^\gamma (A_2\otimes_Rk)\cong \ind_k^\gamma (A\otimes_Rk)\,.
\]
Now (3) follows as the quasi-complete intersection property is characterized by the $k$-space above being concentrated in degrees one and two. 
\end{proof}

Whenever $\vp_1,\vp_2$ are Tor-independent, we have isomorphisms of DG $k$-algebras
$\F{\vp_i}\cong \F{\psi_i}$ for $i=1, 2$.  Thus, whenever a property of a  map can be characterized in terms of the fiber, the parallel map enjoys the same property. This is demonstrated in  \cref{p:parallelmaps} below.

\begin{proposition}\label{p:parallelmaps}
When $\vp_1,\vp_2$ are Tor-independent, with \cref{eq:MinimalIntersectionDiagramAS}, the following hold:
\begin{enumerate}
    \item $\vp_i$ is Golod if and only $\psi_i$ is Golod;
        \item $\vp_i$ is Gorenstein if and only if $\psi_i$ is Gorenstein;
    \item $\vp_i$ is (quasi-)complete intersection if and only $\psi_i$ is (quasi-)complete intersection.
    \item The map $\vp$ is never Golod.
\end{enumerate}
\end{proposition}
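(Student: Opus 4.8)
The plan is to argue that a surjective local map $\vp\colon R\to S$ that factors nontrivially through a Tor-independent pushout cannot be Golod, using the homotopy Lie algebra. Recall that $\vp$ is Golod precisely when the induced map $\pi(\vp)\colon\pi(S)\to\pi(R)$ has a particular strong structural property: the kernel $\pi(\vp)$ is a free Lie algebra on the desuspension $\shift\, H$ of the homology of the "homotopy fiber", and moreover $\pi(\vp)$ is surjective; equivalently, the fiber $\F{\vp}$ is a "Golod object" — all Massey products on $\hh_{\geqslant 1}(\F{\vp})$ vanish and $\Ext_{\F{\vp}}(k,k)$ is the corresponding trivial (tensor) algebra. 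I would first record the precise characterization of Golod maps I want to use (from \cite{Golod} or \cite[\S5]{Asterisque}), phrased either in terms of $\pi(\vp)$ or in terms of the fiber $\F{\vp}$.

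Next I would exploit the decomposition already established in the excerpt: since $\vp_1,\vp_2$ are Tor-independent, there is an isomorphism of DG $k$-algebras $\F{\vp}\cong\F{\vp_1}\otimes_k\F{\vp_2}$ (used in the proof of \cref{p:ci}), and correspondingly, via \cref{introthm1}-type reasoning or directly, $\pi(\F{\vp})$ splits off contributions from both factors. Because both $I_1$ and $I_2$ are nontrivial proper ideals (\cref{eq:MinimalIntersectionDiagramAS}), neither fiber $\F{\vp_i}$ is trivial: each has $\hh_{\geqslant 1}(\F{\vp_i})\neq 0$, since $I_i\neq 0$ forces at least one variable in the minimal model of $\vp_i$ in homological degree $1$ or $2$. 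The key computation is then that the tensor product $\F{\vp_1}\otimes_k\F{\vp_2}$ of two DG $k$-algebras each with nonzero positive homology is never a Golod object: if it were, $\Ext_{\F{\vp}}(k,k)$ would be a tensor algebra on $\shift(\hh_{\geqslant 1}\F{\vp})^\vee$, but by the Künneth-type computation (again as in the proof of \cref{p:ci}, $\Ext_{\F{\vp}}(k,k)\cong \Ext_{\F{\vp_1}}(k,k)\otimes_k\Ext_{\F{\vp_2}}(k,k)$ when the homologies are finite, and more generally there is a change-of-rings spectral sequence) one gets nonzero products between the two tensor factors, e.g.\@ a degree-$1$ element of $\pi(\F{\vp_1})$ and a degree-$1$ element of $\pi(\F{\vp_2})$ have nonzero Lie bracket in $\pi(\F{\vp})$ — equivalently there is a nonzero Massey/matric product — contradicting Golodness. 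A clean way to phrase the contradiction: a Golod map has $\pi^{\geqslant 2}(\vp)$ whose kernel is a free Lie algebra, so $\pi(S)$ would be generated over $\pi(R)$ by a free Lie algebra; but the pullback description forces a "product" structure incompatible with freeness once both $\vp_i$ contribute.

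I would present the cleanest route: reduce to the fiber, observe $\F{\vp}\cong\F{\vp_1}\otimes_k\F{\vp_2}$ with each $\F{\vp_i}$ having $\hh_i(\F{\vp_i})\neq 0$ for some $i\geqslant 1$, and invoke the fact that a tensor product of two augmented DG $k$-algebras with nontrivial positive homology is not Golod — which one can see because the Poincaré series would have to satisfy the Golod (rational, denominator $1-\sum\dim_k\hh_i(\F{\vp})t^{i+1}$) formula, whereas the product $\Ext_{\F{\vp_1}}(k,k)\otimes_k\Ext_{\F{\vp_2}}(k,k)$ has strictly larger growth, or directly because a nonzero bracket $[\xi_1,\xi_2]$ with $\xi_i\in\pi^{1}(\F{\vp_i})$ obstructs freeness of the relevant Lie algebra. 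The main obstacle I anticipate is pinning down the precise formulation of "Golod map" that makes the fiber argument go through cleanly and verifying that the nonvanishing of $\hh_{\geqslant 1}(\F{\vp_i})$ (guaranteed by $I_i\neq 0$, $I_i\subsetneq R$) genuinely forces a nonzero product in the homotopy Lie algebra or $\Ext$ algebra of the tensor product — i.e.\@ ruling out the degenerate possibility that one fiber is "invisible" to Golodness. I expect this to be handled by the observation that $\hh_1(\F{\vp_i})\neq 0$ or $\hh_2(\F{\vp_i})\neq 0$, together with the standard fact that in a Golod (DG) algebra the subalgebra of $\Ext(k,k)$ generated in low degrees is free, which fails for a genuine tensor product.
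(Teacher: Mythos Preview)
Your strategy matches the paper's: reduce to the fiber, use that Tor-independence gives a product decomposition there, and argue this is incompatible with Golodness. The paper's execution is short: it invokes the characterization from \cite[Theorem~3.4(4)]{Golod} that $\phi$ is Golod if and only if $\pi(\F{\phi}_\gamma)$ is a \emph{free} graded Lie algebra, notes the direct computation
\[
\pi(\F{\vp}_\gamma)\cong \pi(\F{\vp_1}_\gamma)\times \pi(\F{\vp_2}_\gamma)\,,
\]
and finishes with the well-known fact that a product of two nonzero graded Lie algebras is never free (nontriviality of each factor coming from $I_i\neq 0$).

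One of your proposed routes---the Massey product one---is fine: a nonzero class $a_i\in\hh_{\geqslant 1}(\F{\vp_i})$ for each $i$ gives $(a_1\otimes 1)\cdot(1\otimes a_2)=a_1\otimes a_2\neq 0$ in $\hh(\F{\vp})$, a nonvanishing binary Massey product, hence not Golod. However, your Lie-bracket formulation is backwards. You write that ``a degree-$1$ element of $\pi(\F{\vp_1})$ and a degree-$1$ element of $\pi(\F{\vp_2})$ have nonzero Lie bracket in $\pi(\F{\vp})$'' and that this obstructs freeness. In fact, in the product $\pi(\F{\vp_1}_\gamma)\times\pi(\F{\vp_2}_\gamma)$ elements coming from distinct factors \emph{commute}; it is precisely this \emph{vanishing} of brackets (equivalently, the graded-commutativity $\xi_1\xi_2=\pm\xi_2\xi_1$ in the Ext tensor product) that prevents the Lie algebra from being free (respectively, the Ext algebra from being a tensor algebra). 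Likewise, the Poincar\'e-series comparison goes the other way: Golod maps \emph{maximize} the Poincar\'e series, so the tensor product has strictly smaller growth than the Golod bound, not larger. If you rewrite the obstruction with the correct direction---commuting elements from the two factors give a relation in what would have to be a free Lie algebra---your argument coincides with the paper's.
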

\begin{proof}
First, using that $\vp_1,\vp_2$ are Tor-independent,  the  isomorphism of DG $k$-algebras
$\F{\vp_i}\cong \F{\psi_i}$ yields one of graded Lie algebras over $k$
\[
\pi(\F{\vp_i}) \cong \pi(\F{\psi_i})\,.
\] Now (1) holds as the homotopy Lie algebra of a map being free characterizes the Golod property of that map; see \cite[Theorem~3.4]{Golod}.

The second assertion is immediate from $\F{\vp_i}\cong\F{\psi_i}$ and  \cite[Theorem 4.4]{AvramovFoxby}.

Finally, for (1) the assumption that $\vp_1,\vp_2$ are Tor-independent forces $A\otimes_R k$ to be isomorphic to $B\otimes_{S_j}k$ where $A$ and $B$ are acyclic closures over $\vp_i$ and $\psi_i$, respectively, where $i\neq j$. This determines the isomorphism of $k$-spaces
\[
\ind_k^\gamma (A\otimes_R k)\cong \ind_k^\gamma (B\otimes_{S_j}k)\,.
\] Now the desired result holds as the quasi-complete intersection property is detected by these $k$-spaces being concentrated in degrees one and two. The statement for complete intersection homomorphisms now follows using that $S_i$ has finite projective dimension over $R$ if and only if  $S$ has finite projective dimension over $S_j$, since $\vp_1,\vp_2$ are Tor-independent; again one is also appealing to \cite[Theorem~2.5]{qci}.

For (4), recall \cite[Theorem 3.4(4)]{Golod} which asserts a map $\phi$ is Golod if and only if $\pi(\F{\phi}_\gamma)$ is a free graded Lie algebras. It is a direct calculation that
\[
\pi(\F{\vp}_\gamma)\cong \pi(\F{\vp_1}_\gamma)\times \pi(\F{\vp_2}_\gamma)
\] 
and it is well-known a product of nonzero Lie algebras cannot be free, the desired result follows; the fact that each $R\to S_i$ has nontrivial kernel, forces the Lie algebras in question to be nonzero.
\end{proof}

We end this section by abandoning the local situation of \cref{eq:MinimalIntersectionDiagramAS}, and instead work in the grading setting. 
\begin{chunk}
\label{chunk:graded}
Let $R$ be a standard graded connected $k$-algebra. One can adapt the definitions from \cref{sec:DG} by requiring all cycles, ideals, maps, etc. to be homogeneous with respect to the internal degree determined by the grading on $R$. Furthermore, in this setting, $\pi(S)$ is naturally bigraded with respect to the usual cohomological degree, as well as the internal degree acquired from $R$; hence we will write $\pi^{*,*}(S)$ to indicate that the homotopy Lie algebra of $S$ is being considered as a bigraded object. Finally, recall that $S$ is a Koszul algebra if $k$ has a linear minimal free resolution over $S$; this resolution being the acyclic closure forces
$\pi^{i,j}(S)=0$ for all $i\neq j$ with $i\geqslant2$.
\end{chunk}

\begin{proposition}\label{p:Koszul}
Let $R$ be a standard graded polynomial ring over the field $k$ and consider quotients $S_1,S_2$ by homogeneous ideals of $R$. If $S_1,S_2$ are  Tor-independent over $R$, then $S$ is Koszul if and only if each $S_i$ is Koszul. 
\end{proposition}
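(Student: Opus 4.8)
The plan is to characterize Koszulness via the homotopy Lie algebra and then import \cref{introthm1}. Recall from \cref{chunk:graded} that for a standard graded connected $k$-algebra $T$, the acyclic closure of $k$ over $T$ computes the minimal free resolution, so $T$ is Koszul if and only if $\pi^{i,j}(T)=0$ whenever $i\geqslant 2$ and $i\neq j$; equivalently, the bigraded homotopy Lie algebra is concentrated along the diagonal $i=j$ in cohomological degrees $\geqslant 2$ (and $\pi^{1,j}(T)=0$ unless $j=1$, which holds automatically since $R$ is standard graded and $S_1,S_2$ are defined by homogeneous ideals). Since $R$ is a polynomial ring, $\pi^{\geqslant 2}(R)=0$, so the pullback $\pi(S_1)\times_{\pi(R)}\pi(S_2)$ is, in cohomological degrees $\geqslant 2$, simply the direct product $\pi^{\geqslant 2}(S_1)\times\pi^{\geqslant 2}(S_2)$.

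First I would check that \cref{introthm1} applies here and that its isomorphism is bigraded. The pair $S_1,S_2$ is Tor-independent over the polynomial ring $R$ by hypothesis, and since $R$ is regular, each $\vp_i$ has finite weak category (this is noted in the introduction: the class of maps with finite weak category contains all maps out of a regular ring), so condition (3) of \cref{introthm1} holds. The isomorphism $\pi(S)\cong\pi(S_1)\times_{\pi(R)}\pi(S_2)$ is naturally induced by the maps $\psi_i\colon S_i\to S$ and $\vp_i\colon R\to S_i$, all of which are homogeneous; tracing through the construction in \cref{c:dualmaps}, where $\pi^{\geqslant 2}$ is identified with the $k$-dual of the map on indecomposables of minimal models, everything respects the internal grading, so the isomorphism of \cref{introthm1} is an isomorphism of \emph{bigraded} Lie algebras in cohomological degrees $\geqslant 2$. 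Combined with \cref{l:pi1} in degree $1$ (and the standard-graded hypothesis pinning $\pi^{1,j}$ to $j=1$), we get $\pi^{i,j}(S)\cong\pi^{i,j}(S_1)\oplus\pi^{i,j}(S_2)$ for all $i\geqslant 2$ and all $j$.

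From this bigraded splitting the equivalence is immediate: $\pi^{i,j}(S)=0$ for all $i\geqslant 2$, $i\neq j$, if and only if both $\pi^{i,j}(S_1)=0$ and $\pi^{i,j}(S_2)=0$ for all such $i,j$; that is, $S$ is Koszul if and only if each $S_i$ is Koszul.

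The main obstacle, and the only point requiring genuine care, is verifying that the isomorphism of \cref{introthm1} is compatible with the internal grading — i.e., that one may legitimately run the entire machinery of \cref{sec:DG} and the proof of \cref{introthm1} in the graded category, so that the resulting Lie algebra isomorphism is bigraded rather than merely an isomorphism of cohomologically graded Lie algebras. This is exactly the content flagged in \cref{chunk:graded} (``one can adapt the definitions... by requiring all cycles, ideals, maps, etc. to be homogeneous''): minimal models, acyclic closures, and the maps on indecomposables all have homogeneous analogues when the input data is homogeneous, and the comparison in \cref{c:dualmaps} then carries an extra internal grading throughout. Once this bookkeeping is granted, the proof is formal.
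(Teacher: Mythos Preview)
Your proposal is correct, but it takes a different route than the paper. You invoke \cref{introthm1} as a black box (via condition~(3), or equally well condition~(2), since $R$ is regular and hence $\pi^{\geqslant 2}(R)=0$), and then argue that the resulting isomorphism respects the internal grading. The paper instead gives a direct, self-contained argument that never appeals to \cref{introthm1}: since $R$ is a polynomial ring and $S_1,S_2$ are Tor-independent, the tensor product $A_1\otimes_R A_2$ of minimal models $A_i\xra{\simeq}S_i$ is itself a minimal model of $S$ over $R$; this immediately yields a bigraded splitting $\ind_k(A\otimes_R k)\cong\ind_k(A_1\otimes_R k)\oplus\ind_k(A_2\otimes_R k)$, whence $\pi^{\geqslant 2,*}(S)\cong\pi^{\geqslant 2,*}(S_1)\times\pi^{\geqslant 2,*}(S_2)$ as bigraded $k$-spaces, and the Koszul characterization from \cref{chunk:graded} finishes. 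The paper's argument is more elementary and, notably, lives in \cref{sec:torind} \emph{before} \cref{introthm1} is proved, so it is logically independent of the main theorem. Your approach is cleaner conceptually and illustrates the strength of \cref{introthm1}, but it forward-references the main result and requires the reader to accept that the entire machinery of \cref{sec:char0} transports to the graded category---which is true, as you note, but is precisely the bookkeeping the paper sidesteps by arguing directly with minimal models.
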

\begin{proof}
In the graded setting, as in the proof of \cref{p:ci}, it follows directly from the assumption that $S_1,S_2$ are Tor-independent that $A\colonequals  A_1\otimes_R A_2$ is a minimal model for $S$ where $A_i$ is a minimal model for $S_i.$
As a consequence there is an isomorphism of bigraded vector spaces 
\[
\ind_k (A_1\otimes_Rk)\oplus \ind_k (A_2\otimes_Rk)\cong \ind_k (A\otimes_Rk)\,.
\] Shifting and taking $k$-linear dual induces an isomorphism of bigraded $k$-spaces \[\pi^{\geqslant 2,*}(S)\cong\pi^{\geqslant 2,*}(S_1)\times\pi^{\geqslant 2,*}(S_2)\,;\]this is analogous to the local case explained in  \cref{c:dualmaps}. The desired result now follows immediately from \cref{chunk:graded}.
\end{proof}

\section{Almost small homomorphisms}\label{sec:AS}

In this section we still adopt \cref{eq:MinimalIntersectionDiagramAS} and prove the conclusion of \cref{introthm1} holds when at least one  $\vp_i$  is almost small.

\begin{chunk}\label{c:almostsmall}
A surjective local map $\vp\colon R\to S$ is \emph{almost small} if  the morphism of graded Lie algebras
\[
\pi^{\geqslant 2}(\vp)\colon   \pi^{\geqslant 2}(S)\to \pi^{\geqslant 2}(R)
\] is surjective. In \cite{AlgebraRetracts} these are defined in terms of Tor algebras. Namely, the map on Tor algebras 
\[
\Tor^\vp(k,k)\colon   \Tor^R(k,k)\to \Tor^S(k,k)
\] has its kernel generated in degree one; these definitions coincide by  \cite[Proposition 4.3]{AlgebraRetracts}. 
\end{chunk}

We first prove a result that identifies pullbacks, also known as fiber products, of graded Lie algebras.

\begin{lemma}\label{pullbacklemma}
Given the following commutative diagram of graded Lie $k$-algebras
\begin{equation*}
\begin{tikzpicture}[baseline=(current  bounding  box.center)]
 \matrix (m) [matrix of math nodes,row sep=2.5em,column sep=3em,minimum width=2em] {
 \g&\h_1\\
 \h_2&\f.\\};
 \path[->>] (m-2-2) edge node[below]{$\gamma$} (m-2-1);
 \path[->] (m-2-2) edge node[right]{$\alpha$} (m-1-2);
 \path[->] (m-1-2) edge node[above]{$\beta$} (m-1-1);
 \path[->] (m-2-1) edge node[left]{$\delta$} (m-1-1);
 \end{tikzpicture}
 \end{equation*}
If $ \alpha|_{\Ker\gamma}\colon \Ker\gamma\rightarrow \Ker \beta$ is an isomorphism of Lie ideals, then $\f\cong \h_1\times_{\g}\h_2.$
\end{lemma}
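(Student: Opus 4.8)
The plan is to produce the evident comparison morphism and show it is an isomorphism. First I would set $\theta\colon\f\to\h_1\times_{\g}\h_2$, $\theta(z)=(\alpha(z),\gamma(z))$, where as usual $\h_1\times_{\g}\h_2=\{(x,y)\in\h_1\oplus\h_2:\beta(x)=\delta(y)\}$; this is well defined exactly because the given square commutes, and it is a morphism of graded Lie $k$-algebras since both $\alpha$ and $\gamma$ are and the bracket and squaring operation on $\h_1\times_{\g}\h_2$ are computed coordinatewise (the pullback is closed under squaring because $\beta(x^2)=\beta(x)^2=\delta(y)^2=\delta(y^2)$). So it remains to check that $\theta$ is bijective. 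I would also record at the outset that $\alpha(\Ker\gamma)\subseteq\Ker\beta$: if $\gamma(z)=0$ then $\beta(\alpha(z))=\delta(\gamma(z))=0$ by commutativity, so the hypothesized isomorphism $\alpha|_{\Ker\gamma}\colon\Ker\gamma\to\Ker\beta$ genuinely makes sense.

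Injectivity of $\theta$ is immediate from injectivity of $\alpha|_{\Ker\gamma}$: if $\alpha(z)=0$ and $\gamma(z)=0$, then $z\in\Ker\gamma$ with $\alpha(z)=0$, so $z=0$. For surjectivity, take $(x,y)\in\h_1\times_{\g}\h_2$, so $\beta(x)=\delta(y)$. Since $\gamma$ is surjective (the two-headed arrow in the diagram), pick $z_0\in\f$ with $\gamma(z_0)=y$; then $\beta\bigl(x-\alpha(z_0)\bigr)=\delta(y)-\delta(\gamma(z_0))=0$, so $x-\alpha(z_0)\in\Ker\beta$. Surjectivity of $\alpha|_{\Ker\gamma}$ onto $\Ker\beta$ furnishes $w\in\Ker\gamma$ with $\alpha(w)=x-\alpha(z_0)$, and then $z\colonequals z_0+w$ satisfies $\alpha(z)=\alpha(z_0)+\alpha(w)=x$ and $\gamma(z)=\gamma(z_0)+0=y$, i.e. $\theta(z)=(x,y)$.

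I do not anticipate a real obstacle here: this is the standard pullback/five-lemma diagram chase, carried out for (graded) $k$-modules and then observed to be compatible with the Lie structure. The only points deserving a line of care are that $\theta$ preserves the bracket and the squaring operation — automatic, as these are componentwise on the pullback and $\alpha,\gamma$ are Lie maps — and that the splitting $z=z_0+w$ used in the surjectivity step is harmless, which it is because every arrow in the diagram is $k$-linear, so the hypothesis ``isomorphism of Lie ideals'' gets used only as a bijection of the underlying graded vector spaces. Phrasing the argument instead through the commutative ladder with exact top row $0\to\Ker\gamma\to\f\xra{\gamma}\h_2\to0$ and the analogous sequence for the pullback is possible but amounts to the same computation, so the direct chase above seems cleanest.
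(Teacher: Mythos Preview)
Your proof is correct and follows essentially the same approach as the paper: define the comparison map $\theta(z)=(\alpha(z),\gamma(z))$, use injectivity of $\alpha|_{\Ker\gamma}$ for injectivity of $\theta$, and use surjectivity of $\gamma$ together with surjectivity of $\alpha|_{\Ker\gamma}$ onto $\Ker\beta$ to correct a lift for surjectivity of $\theta$. Your additional remarks on compatibility with the bracket and squaring, and on why $\alpha(\Ker\gamma)\subseteq\Ker\beta$, are fine supplementary details the paper leaves implicit.
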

\begin{proof}
The commutative diagram defines a graded Lie $k$-algebra map $
\sigma\colon \f\to \h_1\times_{\g}\h_2$ given by $x\mapsto (\alpha(x),\gamma(x)).$ We show $\sigma$ is an isomorphism. 

Let $x\in \Ker \sigma,$ then $\alpha(x)=0$ and $\gamma(x)=0$. The latter says $x\in \Ker \gamma$ and so the former implies $x=0$ since $ \alpha|_{\Ker\gamma}$ is injective by assumption; thus, $\sigma$ is injective.

Let $(x_1,x_2)\in \h_1\times_{\g}\h_2,$ then by the assumption that $\gamma$ is surjective there exists $y\in \f$ such that $\gamma(y)=x_2.$ As $\delta(x_2)=\beta(x_1)$ and the diagram above commutes  we conclude that $\alpha(y)-x_1\in \Ker \beta$. Now using that $\alpha(\Ker\gamma)=\Ker \beta,$ there exists $y'\in \Ker \gamma$ such that $\alpha(y')=\alpha(y)-x_1.$ Now observe that 
\[
\sigma(y-y')=(\alpha(y-y'),\gamma(y-y'))=(x_1,\gamma(y))=(x_1,x_2),
\] as desired, justifying that $\sigma$ is surjective. 
\end{proof}

\begin{theorem}\label{thm:SurjPiIsoAlmostSmall}
Suppose $S_1\xla{\vp_1}R\xra{\vp_2} S_2$ is a pair of Tor-independent surjective local ring maps with residue field $k$, and set $S=S_1\otimes_R S_2$. If at least one $\vp_i$ is almost small, then $\pi$ induces the following isomorphism of graded Lie $k$-algebras \[\pi(S)\cong \pi(S_1)\times_{\pi(R)}\pi(S_2)\,.\]
\end{theorem}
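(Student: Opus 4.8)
The plan is to reduce the theorem to the pullback criterion of \cref{pullbacklemma} applied to the commutative square of homotopy Lie algebras
\[
\begin{tikzpicture}[baseline=(current  bounding  box.center)]
 \matrix (m) [matrix of math nodes,row sep=2.5em,column sep=3em,minimum width=2em] {
 \pi(R)&\pi(S_1)\\
 \pi(S_2)&\pi(S)\\};
 \path[->] (m-2-2) edge node[below]{$\pi(\psi_1)$} (m-2-1);
 \path[->] (m-2-2) edge node[right]{$\pi(\psi_2)$} (m-1-2);
 \path[->] (m-1-2) edge node[above]{$\pi(\vp_1)$} (m-1-1);
 \path[->] (m-2-1) edge node[left]{$\pi(\vp_2)$} (m-1-1);
 \end{tikzpicture}
\]
induced by \cref{eq:MinimalIntersectionDiagramAS}. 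With $\gamma=\pi(\psi_1)$, $\alpha=\pi(\psi_2)$, $\beta=\pi(\vp_1)$, the hypothesis of \cref{pullbacklemma} becomes the claim that $\pi(\psi_2)$ restricts to an isomorphism $\Ker\pi(\psi_1)\xra{\cong}\Ker\pi(\vp_1)$ of Lie ideals, together with surjectivity of $\pi(\psi_1)$. So the real content is an identification of kernels. First I would assume, without loss of generality, that $\vp_2$ is the almost small map. Since $\vp_1,\vp_2$ are Tor-independent, \cref{p:parallelmaps} (or rather the isomorphism $\F{\vp_2}\cong\F{\psi_2}$ underlying it) and \cref{c:almostsmall} show that $\psi_2$ is also almost small; hence $\pi^{\geqslant 2}(\vp_2)$ and $\pi^{\geqslant 2}(\psi_2)$ are surjective. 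The degree-one behaviour is governed by \cref{l:pi1}, whose hypothesis holds by Tor-independence.

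The key step is to build compatible DG algebra resolutions and read off the maps on $\pi$ from them. Choose an acyclic closure $R\langle Y_1\rangle\xra{\simeq}S_1$ for $\vp_1$ and $R\langle Y_2\rangle\xra{\simeq}S_2$ for $\vp_2$; by Tor-independence (as in the proof of \cref{p:ci}) the DG $R$-algebra $A\colonequals R\langle Y_1\rangle\otimes_R R\langle Y_2\rangle$ is an acyclic closure for $\vp$, and moreover $S_1\langle Y_2\rangle\colonequals S_1\otimes_R R\langle Y_2\rangle$ is an acyclic closure for $\psi_2$ while $S_2\langle Y_1\rangle$ is one for $\psi_1$. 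Restricting scalars, one gets a diagram of acyclic closures over $R$ compatible with all four maps in \cref{eq:MinimalIntersectionDiagramAS}. Now apply $k\otimes_R(-)$: one obtains $\F{\vp}_\gamma\cong\F{\vp_1}_\gamma\otimes_k\F{\vp_2}_\gamma$, and the maps on $\Gamma$-indecomposables split as a direct sum. Dualizing the (shifted) complex of $\Gamma$-indecomposables and passing to homology as in \cref{c:dualmaps} identifies $\pi^{\geqslant 2}(S)\cong\pi^{\geqslant 2}(S_1)\oplus\pi^{\geqslant 2}(S_2)$ as graded $k$-spaces, with $\pi^{\geqslant 2}(\psi_1)$ becoming the projection onto $\pi^{\geqslant 2}(S_2)$ and $\pi^{\geqslant 2}(\psi_2)$ the projection onto $\pi^{\geqslant 2}(S_1)$; consequently $\Ker\pi^{\geqslant 2}(\psi_1)\cong\pi^{\geqslant 2}(S_1)$, and $\pi^{\geqslant 2}(\psi_2)$ carries this isomorphically onto $\pi^{\geqslant 2}(S_1)=\Ker\pi^{\geqslant 2}(\vp_1)$ — the last equality because $\pi^{\geqslant 2}(\vp_1)$ is, under these identifications, the inclusion of $\pi^{\geqslant 2}(S_1)$ into $\pi^{\geqslant 2}(S_1)\oplus\pi^{\geqslant 2}(S_2)\cong\pi^{\geqslant 2}(S)$. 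Wait — that last identification requires $\vp_1$ to be almost small; here one must instead use that $\pi^{\geqslant 2}(\psi_2)$ is surjective to see that the composite $\pi^{\geqslant 2}(S_1)\hookrightarrow\pi^{\geqslant 2}(S)\xra{\pi^{\geqslant 2}(\psi_2)}\pi^{\geqslant 2}(S_1)$ is the identity, forcing $\pi^{\geqslant 2}(\psi_2)|_{\Ker\pi^{\geqslant 2}(\psi_1)}$ to be an isomorphism onto $\pi^{\geqslant 2}(S_1)$, while $\Ker\pi^{\geqslant 2}(\vp_1)$ is squeezed between them by commutativity.

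It then remains to upgrade this $k$-space statement to the assertion of \cref{pullbacklemma}: that the restriction is an isomorphism of Lie \emph{ideals}, and to handle degree one. That $\Ker\pi(\psi_1)$ and $\Ker\pi(\vp_1)$ are Lie ideals is automatic since they are kernels of Lie algebra maps; that $\alpha|_{\Ker\gamma}$ is a Lie map is automatic; so only bijectivity is at issue, and in degree $\geqslant 2$ that is what the resolution computation gives. In degree one, \cref{l:pi1} says the bottom square is already a pullback on $\pi^1$, which in particular gives that $\pi^1(\psi_2)\colon\Ker\pi^1(\psi_1)\to\Ker\pi^1(\vp_1)$ is an isomorphism (a short diagram chase from the pullback description, using surjectivity of $\pi^1(\psi_1)$, which holds as $\psi_1$ is surjective); together with surjectivity of $\pi(\psi_1)$ in all degrees (again because $\psi_1$ is a surjective local map, cf.\ \cref{HomotopyLieAlgebra}), the hypotheses of \cref{pullbacklemma} are met and we conclude $\pi(S)\cong\pi(S_1)\times_{\pi(R)}\pi(S_2)$. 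The main obstacle I anticipate is the bookkeeping in the middle step: making sure the tensor-product acyclic closures really do compute all four maps $\pi(\vp_i),\pi(\psi_i)$ simultaneously and that, after dualizing, the almost-smallness of a single $\vp_i$ is genuinely enough to force the kernel identification without secretly assuming both maps are almost small. Handling degree one separately, rather than hoping the resolution argument covers it uniformly, is the clean way around the fact that $\pi^1$ is not controlled by the complex of $\Gamma$-indecomposables.
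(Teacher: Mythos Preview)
There is a genuine gap. The central step of your argument --- the claimed splitting
\[
\pi^{\geqslant 2}(S)\cong \pi^{\geqslant 2}(S_1)\oplus \pi^{\geqslant 2}(S_2)
\]
as graded $k$-spaces --- is false in general, and the reason is that you are computing the wrong indecomposable complex. The acyclic closures $R\langle Y_i\rangle\xra{\simeq}S_i$ are taken over $R$, so $k\otimes_R R\langle Y_i\rangle=\F{\vp_i}_\gamma$ and its $\Gamma$-indecomposables encode the homotopy Lie algebra of the \emph{fiber} $\pi(\F{\vp_i})$, not $\pi^{\geqslant 2}(S_i)$. The identification in \cref{c:dualmaps} that you invoke requires minimal models over a \emph{regular} ring (a minimal Cohen presentation); it does not apply to acyclic closures over a singular $R$. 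A dimension count makes the failure visible: when the theorem holds with all $\pi^{\geqslant 2}$-maps surjective, one gets $\varepsilon_n(S)=\varepsilon_n(S_1)+\varepsilon_n(S_2)-\varepsilon_n(R)$, so whenever $\pi^{\geqslant 2}(R)\neq 0$ your direct sum overcounts.

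The paper's proof avoids this by passing first to a minimal Cohen presentation $P\to R$ with minimal model $P[W]\xra{\simeq}R$, and this is precisely where the almost-small hypothesis is spent: by \cite[Theorem~4.11]{AlgebraRetracts}, almost smallness of $\vp_1$ guarantees a minimal model $Q_1[X,W]\xra{\simeq}S_1$ in which the $W$-variables survive intact under the lift $\tilde\vp_1$. Tensoring with a minimal model $Q_2[Y]\xra{\simeq}S_2$ over $P[W]$ then produces a minimal model $Q[X,Y]$ of $S$, and now \cref{c:dualmaps} legitimately identifies $\pi^{\geqslant 2}$ of each ring with the dual of the indecomposables. Your construction never uses almost smallness at the DG level, only after the fact --- but by then the splitting you need has already failed. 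A secondary issue: you assert that $\pi(\psi_1)$ is surjective ``because $\psi_1$ is a surjective local map''; this is false too (surjective ring maps give \emph{injective} maps on $\pi^1$ by duality, and surjectivity on $\pi^{\geqslant 2}$ is exactly the almost-small condition, not automatic).
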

\begin{proof}
We assume that $\vp_1$ is almost small. As homotopy Lie algebras and the Tor-independence of the maps are both invariant under completion, see \cref{HomotopyLieAlgebra} for the former point, we can assume $R$ is complete. Now let $\rho\colon P\rightarrow R$ be a minimal Cohen presentation of $R$.

 Let $P[W]\to R$ be a minimal model of $\rho$. Since $\vp_1$ is almost small, by \cite[Theorem 4.11]{AlgebraRetracts}, we have the following commutative diagram of differential graded $P$-algebras: 
\begin{equation*}
\begin{tikzpicture}[baseline=(current  bounding  box.center)]
 \matrix (m) [matrix of math nodes,row sep=2.5em,column sep=3em,minimum width=2em] {
 P[W]&Q_1[X,W]\\
 R&S_1\\};
 \path[->] (m-1-1) edge node[above]{$\tilde{\vp}_1$} (m-1-2);
 \path[->] (m-1-1) edge  (m-2-1);
 \path[->] (m-1-2) edge (m-2-2);
 \path[->] (m-2-1) edge node[above]{$\vp_1$} (m-2-2);
 \end{tikzpicture}
 \end{equation*}
where $Q_1$ is a regular quotient of $P$ with $Q_1\to S_1$ a minimal Cohen presentation; the vertical map on the right $Q_1[X, W] \to S_1$ is a minimal model for $S_1$ over $Q_1$, and $\tilde{\vp}_1(w)=w$ for all $w\in W$. Let $Q_2[Y] \to S_2$ be minimal model of $S_2$ where $Q_2$ is a regular quotient of $P$ and  $Q_2\to S_2$ is a minimal Cohen presentation. Finally, set $Q$ to be the regular quotient $Q_1\otimes_P Q_2$ of $P$.

Since $\vp_1,\vp_2$ are Tor-independent, the canonical map 
\[
Q_1[X, W] \otimes_{P[W]} Q_2[Y] \to S
\]
is a quasi-isomorphism where $Q_2[Y]$ is regarded as a DG $P[W]$-module by lifting $\vp_2$. The following isomorphisms of graded algebras \begin{align*}
  Q_1[X, W] \otimes_{P[W]} Q_2[Y] & \cong Q_1[X] \otimes_{P} \left(P[W] \otimes_{P[W]} Q_2[Y]\right)\\
    &\cong Q[X, Y]
\end{align*}
induce a differential on the semifree extension $Q[X, Y]$. Moreover, $Q[X, Y] \to S$ is a minimal model over $Q$.
Indeed, the isomorphisms above induce an isomorphism
\[
\alpha\colon k[X, W] \otimes_{k[W]} k[Y] \xra{\cong} k[X, Y]
\]
where $\alpha(x \otimes 1) = x$ and $\alpha(1 \otimes y) = y$ for each $x \in X, y \in Y$, and  $\alpha(W \otimes 1)$ is contained in the DG ideal generated by $Y$ in $k[X, Y]$.  As a consequence the induced differential  on $k[X,Y]$ satisfies  
\[
\del(k[X, Y]) \subseteq (X, Y)^2
\] since $k[X,W]$ and $k[Y]$ are minimal semifree extensions over $k$. Finally, 
\[
Q\cong Q_1\otimes_P Q_2\to S_1\otimes_P S_2\cong S
\] is a minimal Cohen presentation as $Q_i\to S_i$ are each minimal Cohen presentations. 
Thus, we have the following commutative diagram of semifree extensions
\begin{equation}\label{eq:minmodels}
\begin{tikzpicture}[baseline=(current  bounding  box.center)]
 \matrix (m) [matrix of math nodes,row sep=2.5em,column sep=3em,minimum width=2em] {
 k[W]&k[X,W]\\
 k[Y]&k[X,Y]\\};
 \path[right hook->] (m-1-1) edge  (m-1-2);
 \path[->] (m-1-1) edge  (m-2-1);
 \path[->] (m-1-2) edge (m-2-2);
 \path[right hook->] (m-2-1) edge (m-2-2);
 \end{tikzpicture}
 \end{equation}
where the horizontal maps are the canonical inclusions and the vertical map on the right being the identity when restricted to $X$. 

Since the minimal models were taken with respect to \emph{minimal} Cohen presentations, by \cref{c:dualmaps} there exists the following corresponding commutative diagram of graded Lie algebras:
\begin{equation}\label{eq:liealgebras}
\begin{tikzpicture}[baseline=(current  bounding  box.center)]
 \matrix (m) [matrix of math nodes,row sep=3em,column sep=4em,minimum width=2em] {
 \pi^{\geqslant2}(R)&\pi^{\geqslant2}(S_1)\\
 \pi^{\geqslant2}(S_2)&\pi^{\geqslant2}(S)\\};
 \path[->>] (m-1-2) edge node[above]{$\pi^{\geqslant 2}(\vp_1)$} (m-1-1);
 \path[->]  (m-2-1) edge node[left]{$\pi^{\geqslant 2}(\vp_2)$} (m-1-1);
 \path[->]  (m-2-2) edge node[right]{$\pi^{\geqslant 2}(\psi_2)$} (m-1-2);
 \path[->>]  (m-2-2) edge node[below]{$\pi^{\geqslant 2}(\psi_1)$} (m-2-1);
 \end{tikzpicture}
 \end{equation}
where the horizontal maps in \cref{eq:liealgebras} are natural projections whose kernels have a $k$-basis of derivations indexed by the variables $X$. Also, as the vertical map on the right in \cref{eq:minmodels} is the identity when restricted to  $X$, the morphism $\pi^{\geqslant2}(\psi_2)$ maps the derivations corresponding to $X$ in $\pi^{\geqslant 2}(S)$ bijectively to the  derivations corresponding to $X$ in $\pi^{\geqslant 2}(S_1)$. Applying \cref{pullbacklemma} we have established the following isomorphism of graded Lie algebras
\begin{equation}\label{eq:iso1}
\pi^{\geqslant 2}(S)\cong \pi^{\geqslant 2}(S_1)\times_{\pi^{\geqslant 2}(R)}\pi^{\geqslant 2}(S_2)\,.\end{equation}
From \cref{l:pi1}, we obtain the following isomorphism of $k$-spaces
\begin{equation}\label{eq:iso2}
\pi^1(S) \cong \pi^1(S_1) \times_{\pi^1(R)} \pi^1(S_2)\,.
\end{equation}
The isomorphisms in \cref{eq:iso1} and \cref{eq:iso2} are  restrictions of the unique map of graded Lie algebras 
\[ 
\pi(S) \to \pi(S_1) \times_{\pi(R)} \pi(S_2)\,,
 \] completing the proof of the theorem. 
\end{proof}

\begin{remark}\label{remark:cmonman}
With the stronger assumption that one of the $\vp_i$ is assumed to be small, the assertion of \cref{thm:SurjPiIsoAlmostSmall} was established in \cite{small}. The argument in \emph{loc.\@ cit.\@} proceeds by establishing a spectral sequence of  Hopf algebras whose $\textrm{E}^2$-page is determined by the Tor Hopf algebras over $R$ and the $S_i;$ the $\textrm{E}^\infty$-page of this spectral sequence is the Tor algebra over $S$. 
Tor-independence of the $S_i$ over $R$, forces the spectral sequence to degenerate from which the isomorphism asserted in \cref{cor:TorAlg} can be established. 
Finally, one can now use a celebrated equivalence of categories of Andr\'{e}, Milnor--Moore, and Sj\"{o}din in \cite{Andre,MM,Sjodin} to prove the assertion of \cref{thm:SurjPiIsoAlmostSmall} holds---still with the assumption that one of the $\vp_i$ is assumed to be small.

It seems likely that one can follow a similar line of arguing to prove \cref{thm:SurjPiIsoAlmostSmall}; however, besides involving a wealth of machinery in \cite{tensor,small}, there are still a few sticking points we wish to highlight. A hurdle is establishing an analog of the spectral sequence in \cite[Theorem 5.5]{small}. One can use \cite[Theorem 4.11]{AlgebraRetracts} in lieu of \cite[Corollary 5.4]{small} when trying to extend \cite[Theorem 5.5]{small} to when one of the $\vp_i$ is small. However it is not clear  to the  authors how to adapt \cite[Lemma 5.2]{small}, and the references therein, to work in this generality. 
\end{remark}
\begin{remark}
When $R$ is regular the assumption that one of the $\vp_i$ is almost small is vacuous. In particular, the conclusion of \cref{thm:SurjPiIsoAlmostSmall} holds for any  minimal intersection ring in the terminology of \cite{JM}; cf.\@ \cref{ch:history}. As a consequence, such a ring $S$ whose completion $\widehat{S}$ can be written as $Q/(J_1+J_2)$ where $Q$ is a regular local ring and $J_i$  are nontrivial ideals with $\Tor^Q(Q/J_1,Q/J_2)\cong \widehat{S}$,
has $\pi^{\geqslant 2}(S)$ decomposable as 
\[
\pi^{\geqslant 2}(Q/J_1)\times \pi^{\geqslant 2}(Q/J_2)\,
\] where we use the isomorphism $\pi^{\geqslant 2}(S)\cong \pi^{\geqslant 2}(\widehat{S})$; see \cref{HomotopyLieAlgebra}. This observation can be used to deduce the well known fact that the homotopy Lie algebra of a complete intersection ring of codimension $c\geqslant 2$ in degrees strictly larger than one is an abelian Lie algebra with $c$ generators in degree two; cf.\@ \cref{ex:ci}.
\end{remark}

\begin{remark}
Let $\vp\colon R\rightarrow S$ be a not necessarily surjective map of local rings. Take a Cohen factorization of (the completion of) $\vp$
\[
 \widehat R\xra{\dot{\vp}}R'\xra{\vp'}\widehat{S},
\] as described in \cite{AFH};
here $\dot{\vp}$ is weakly regular---in the sense it is a flat local homomorphism with regular fiber---and  $\vp'$ is surjective.
If $\widehat{S}$ can be written as $S_1\otimes_{R'}S_2\cong\widehat S$, where $S_1, S_2$ are Tor-independent quotients of $R$ and at least one of $R'\to S_i$ is almost small, then from \Cref{thm:SurjPiIsoAlmostSmall} one can deduce\[
\pi^{\geqslant2}(S)\cong\pi^{\geqslant2}(S_1)\times_{\pi^{\geqslant2}(R)}\pi^{\geqslant2}(S_2),
\]
using the facts that $\pi^{\geqslant 2}(R)\cong\pi^{\geqslant2}(R')$ since $\dot{\vp}$ is weakly regular and $\pi^{\geqslant 2}(S)\cong \pi^{\geqslant 2}(\widehat{S})$.

If $\vp$ is assumed to be almost small to begin with then, by \cite[Proposition 4.8]{AlgebraRetracts}, the map $\vp'$ is also almost small. It follows from \cite[Corollary 4.5(b)]{AlgebraRetracts}, that when $\widehat{S}$ is a tensor product of Tor-independent quotients $S_1,S_2$ of $R'$ that both maps $R'\to S_i$ are always almost small.
\end{remark}

\section{Residual characteristic zero and finite weak category}
\label{sec:char0}
We keep the notation set in \cref{eq:MinimalIntersectionDiagramAS}.


\begin{chunk}\label{c:AdjoinVariables}
Given a surjective quasi-isomorphism of  DG algebras $\alpha\colon A\to B$ we recall a lifting property of semifree extensions. For a set of cycles $Z$ in $B$, upon choosing a set of lifts $\tilde{Z}$ for $Z$ in $A$, there exists a surjective quasi-isomorphism 
\[
A[X]\to B[X]
\] extending $\alpha$ and sending each $x$ to the corresponding $x$ where $X=\{x_z: |x_z|=|z|+1\}_{z\in Z}$ and 
\[
\del^{A[X]}(x_z)=\tilde{z} \ \text{ and } \ \del^{B[X]}(x_z)=z\,.
\]

This will be applied specifically in the following setting:
\[
Q\xra{\rho} R\xra{\vp} S
\] is a composition of surjective local maps with minimal models $Q[W]$ and $R[X]$ for $\rho$ and $\vp,$ respectively. The surjective quasi-isomorphism $Q[W]\xra{\simeq}R$ extends to a surjective quasi-isomorphism 
\[
Q[W,X]\xra{\simeq} R[X]
\]as described above. Note as the map above is a quasi-isomorphism, we have $Q[W,X]\xra{\simeq}S$. We will freely use this construction in the sequel. 
\end{chunk}

 Below we recall a concept from \cite{AlgebraRetracts}. 
\begin{chunk}\label{d:WeakCat}
A surjective map $R\xra{\vp}S$ of local rings is said to have \emph{finite weak category} if there exists $s\geqslant 1$ such that for each $n\geqslant 2$ the product of $s$ elements from $\hh_{>0}(k[X_{\geqslant n}])$ is zero where $R[X]\xra{\simeq} S$ is a minimal model for $\vp$.
\end{chunk}

\begin{chunk}\label{c:WeakCatGMin}
By \cite[Theorem~5.6]{AlgebraRetracts}, maps of finite projective dimension have finite weak category. Note that the \emph{weak category} of a map is a quantity defined in \emph{loc.\@ cit}. However for the purposes of the present article we only require it be finite and can disregard its actual value.  
\end{chunk}

\begin{lemma}\label{l:WeakCat}
Let $\vp\colon R\to S$ be a surjective map of local rings and let $P\to \widehat{R}$ be a minimal Cohen presentation. If $\vp$ has finite weak category or $k$ has characteristic zero then there is a commutative diagram
\begin{center}
\begin{tikzpicture}[baseline=(current  bounding  box.center)]
 \matrix (m) [matrix of math nodes,row sep=3em,column sep=4em,minimum width=2em] {
P[W]&Q[U,X]\\
\widehat{R}&\widehat{S}\\};
 \path[->] (m-1-1) edge node[above]{$\tilde\vp$} (m-1-2);
  \path[->] (m-1-2) edge node[right]{$\simeq$} (m-2-2);
 \path[->] (m-1-1) edge node[left]{$\simeq$} (m-2-1);
  \path[->] (m-2-1) edge  (m-2-2);
 \end{tikzpicture}
\end{center}
satisfying the following conditions: 
\begin{enumerate}
\item the vertical maps are minimal models with respect to minimal Cohen presentations $P\to \widehat{R}$ and $Q\to \widehat{S}$;
\item $\tilde{\vp}$ extends a surjective map $P\to Q$, with $U\subseteq W$ and $\tilde{\vp}(w)=\begin{cases}
    w & \text{if }w\in U\\
    0 & \text{otherwise}
    \end{cases}$
    for $w\in W.$
\end{enumerate}
\end{lemma}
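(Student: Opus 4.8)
The plan is to construct the diagram by a two-step process: first produce a good model for $\widehat{R}$ over $P$, then lift $\vp$ to obtain a model for $\widehat{S}$ over a suitable regular quotient $Q$ of $P$, controlling the variables so that the described behavior of $\tilde{\vp}$ on $W$ holds. Since homotopy Lie algebras, minimal models, finite weak category, and characteristic-zero hypotheses are all insensitive to completion, I may replace $R$ and $S$ by their completions from the start and work with a minimal Cohen presentation $P\to \widehat R$. Fix a minimal model $P[W]\xra{\simeq}\widehat{R}$ as in \cref{MinModel}. Composing with $\vp$ gives a surjection $P[W]\to \widehat S$; the point is to factor this appropriately.

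First I would produce a minimal Cohen presentation $Q\to \widehat S$ together with a compatible surjection $P\to Q$. Because $\vp$ is surjective, the induced map on cotangent spaces $\m_{\widehat R}/\m_{\widehat R}^2\to \m_{\widehat S}/\m_{\widehat S}^2$ is surjective, so one can choose a regular system of parameters for $P$ mapping to a minimal generating set of $\m_{\widehat S}$ extended by elements killed in $\widehat S$; this yields $P\twoheadrightarrow Q$ with $Q$ regular and $Q\to \widehat S$ a minimal Cohen presentation. Correspondingly, the variables $W$ split: those whose image under the lift of $\vp$ can be arranged to remain variables (call this subset $U$) and the rest, which should map into the ideal generated by the new variables $X$. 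The mechanism for adjoining the $X$'s is exactly \cref{c:AdjoinVariables}: starting from $Q[W]\xra{\simeq}\widehat{R}$ obtained by base change, one kills the cycles in $Q[W]\otim_Q \widehat S$-homology (equivalently, resolves $\widehat S$ over the model for $\widehat R$) by adjoining divided-power/polynomial/exterior variables $X$, getting a surjective quasi-isomorphism onto a minimal model $Q[W,X]\xra{\simeq}\widehat S$, which after renaming is $Q[U,X]$ with the extra $W\setminus U$ variables folded in.

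The crucial input — and the place where the two hypotheses enter — is ensuring the resulting model can be taken \emph{minimal} with $\tilde{\vp}$ of the stated triangular form, i.e. that one does not introduce differentials forcing $\tilde{\vp}(w)$ for $w\notin U$ to have a unit component or a component outside $(X)$. This is where I expect the main obstacle to lie. In characteristic zero one invokes the structure theory that lets one kill cycles using \emph{polynomial} variables in even degrees (avoiding the divided-power subtleties of positive characteristic), together with the André/Avramov machinery guaranteeing minimal models behave well; the argument should mirror the characteristic-zero portion of \cite{AlgebraRetracts} and the references in \cref{HomotopyLieAlgebra}. When $\vp$ has finite weak category, the relevant statement is essentially \cite[Theorem~4.11]{AlgebraRetracts} (the analog used for almost small maps in the proof of \cref{thm:SurjPiIsoAlmostSmall}) adapted to this setting: finite weak category is precisely the condition that makes the obstruction-theoretic argument for extending $\tilde\vp$ over a regular quotient $Q$ terminate, so that $W\setminus U$ variables can be sent into the DG ideal $(X)$ while preserving minimality $\del(k[U,X])\subseteq (U,X)^2$.

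Finally I would verify condition (1), namely that both vertical maps are minimal models with respect to minimal Cohen presentations: $P\to\widehat R$ was chosen minimal and $P[W]$ is minimal by construction; for $Q\to\widehat S$, minimality of the Cohen presentation was arranged above, and minimality of $Q[U,X]$ follows since adjoining the $X$'s via \cref{c:AdjoinVariables} to kill a minimal generating set of the relevant homology keeps the differential inside $(U,X)^2$ — this is the standard criterion recalled after \cref{MinModel}. Once these pieces are assembled, commutativity of the square and the prescribed form of $\tilde\vp$ on $W$ are immediate from the construction, completing the proof. The only genuinely delicate step, to reiterate, is the lifting of $\vp$ to the triangular map $\tilde\vp$ with $U\subseteq W$ and $\tilde\vp(w)=0$ for $w\notin U$, which rests on either the characteristic-zero polynomial-killing technique or the finite-weak-category hypothesis via the cited results of \cite{AlgebraRetracts}.
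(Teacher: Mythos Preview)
Your proposal has a genuine gap in how the hypotheses are used. You invoke \cite[Theorem~4.11]{AlgebraRetracts} for the finite weak category case, but that theorem is specifically the structure theorem for \emph{almost small} maps; it has nothing to do with finite weak category, and there is no evident way to ``adapt'' it. The paper instead proceeds as follows: fix minimal models $P[W]\xra{\simeq}R$ and $R[Y]\xra{\simeq}S$, and form the (generally non-minimal) semifree extension $P[W,Y]$ via \cref{c:AdjoinVariables}. The key technical step is then a \emph{minimalization} theorem producing a minimal $P[U,X']$ with $U\subseteq W$, $X'\subseteq Y$, and a quasi-isomorphism $P[W,Y]\to P[U,X']$ sending each variable either to itself or to zero. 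In characteristic zero this is \cite[Lemma~3.2.1]{GL}; under finite weak category it is \cite[Theorems~24 and 25]{Briggs}, the latter guaranteeing that $\del_{2i}(P[W,Y])\subseteq \m_P[W,Y]+(W,Y)^2$, which is exactly the obstruction to minimalization in positive characteristic. Only \emph{after} this does one pass to $Q=P/J$ and replace the degree-one variables in $X'$ hitting $J$ by zero.

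Your order of operations is also problematic: you pass to $Q$ first and write ``$Q[W]\xra{\simeq}\widehat R$ obtained by base change,'' but base-changing $P[W]$ along $P\to Q$ does not give a model for $\widehat R$. More importantly, the phrase ``the extra $W\setminus U$ variables folded in'' hides the entire difficulty. Without the minimalization theorems of Gulliksen--Levin and Briggs there is no mechanism guaranteeing that the variables in $W\setminus U$ can be sent to zero while keeping the target minimal; this is precisely what fails for arbitrary $\vp$ and why the two hypotheses are needed.
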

\begin{proof}
Assume $R$ and $S$ are complete, and fix minimal models $P[W]\to R$ and $R[Y]\to S$.
In the case that $\vp$ has finite weak category we can assume 
$P[W, Y]$, as constructed in \cref{c:AdjoinVariables},  satisfies 
\begin{equation}\label{eq:GulliksenMinimal}
\del_{2i}(P[W, Y]) \subseteq \m_P[W, Y] + (W, Y)^2
\end{equation} for all $i\geqslant 0$, equivalently $\del_{2i}(\ind_kk[W,Y]) = 0$ for all 
$i\geqslant 0$; see \cite[Theorem~25]{Briggs}. 
Now under either hypothesis,  there exists a minimal semifree extension $P[U,X']$ with $U\subseteq W$ and $X'\subseteq Y$ that fits into a commutative diagram 
\begin{center}
\begin{tikzpicture}[baseline=(current  bounding  box.center)]
 \matrix (m) [matrix of math nodes,row sep=3em,column sep=4em,minimum width=2em] {
P[W,Y]&P[U,X']\\
&S\\};
 \path[->] (m-1-1) edge node[above]{$\simeq$} (m-1-2);
 \path[->] (m-1-2) edge node[right]{$\simeq$} (m-2-2);
 \path[->] (m-1-1) edge node[below]{$\simeq$} (m-2-2);
 \end{tikzpicture}
 \end{center}
where the horizontal map is determined by $t\mapsto \begin{cases}
t & t\in U\cup X' \\
0 & \text{otherwise}
\end{cases}$
for $t\in W\cup Y$ and $X_{2i+1}'=Y_{2i+1}$ for all nonnegative integers $i$. 
The existence of this diagrams is contained in the proof of \cite[Lemma~3.2.1]{GL} in the case that the characteristic  of $k$  is zero,  and it holds when \cref{eq:GulliksenMinimal} is satisfied by \cite[Theorem~24]{Briggs}. 

 Next take the ideal $J$ in $P$ generated by the elements in
 \[(\m_P\backslash\m_P^2)\cap \ker(P\to S)\]
 and set $Q=P/J;$ note that by construction $Q\to S$ is a minimal Cohen presentation. The quotient map $P\to Q$ induces the quasi-isomorphism of DG algebras
 $P[X_1']\xra{\simeq}  Q[X_1]$ where $X_1'\backslash X_1$ corresponds to a minimal generating set for $J$ and for $x'\in X_1'$ we have
 \[
 x'\mapsto \begin{cases}
 x' & x'\notin X_1\\
 0 & x'\in X_1\,.
 \end{cases}
 \]
 By \cref{c:AdjoinVariables}, this extends to a quasi-isomorphism of DG algebras 
 $ P[U,X']\xra{\simeq} Q[U,X]$ with $X_i=X_i'$ for $i>1$ satisfying $x\mapsto x$ for each $x\in X_{\geqslant2}.$
 Setting $\tilde{\vp}$ to be the composition of the maps 
 \[
 P[W]\to P[W,Y]\xra{\simeq} P[U,X']\xra{\simeq} Q[U,X]\,,
 \]
where the maps are constructed above, has the desired properties.
Finally, by the construction of $P[U,X']\xra{\simeq} Q[U,X]$ it follows that $Q[U,X]$ is a minimal model for $S$ over $Q$ since $P[U,X']$ is a minimal model for $S$ over $P$.
\end{proof}

The main result of this section shows the conclusion of \cref{introthm1} is satisfied when the residue field has characteristic zero or when each $\vp_i$ has finite weak category.  

\begin{theorem}
\label{thm:surjPiIsochar0}
Let $R\xra{\vp_i} S_i$ be  surjective ring maps with residue field $k$ for $i=1,2$, and  set $S=S_1\otimes_R S_2.$
If  $\vp_1,\vp_2$ are Tor-independent, and in addition the characteristic of $k$ is zero or each $\varphi_i$ has finite weak category, then the functor $\pi$ induces the following isomorphism of graded Lie algebras \[\pi(S)\cong \pi(S_1)\times_{\pi(R)}\pi(S_2)\,.\]
\end{theorem}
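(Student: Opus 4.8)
The plan is to follow the proof of \cref{thm:SurjPiIsoAlmostSmall} essentially verbatim, with \cref{l:WeakCat} playing the role that \cite[Theorem~4.11]{AlgebraRetracts} plays there: its hypotheses are exactly that $k$ have characteristic zero or that $\vp$ have finite weak category, and here it is applied to $\vp_1$ and to $\vp_2$. Since homotopy Lie algebras and Tor-independence are unaffected by completion (see \cref{HomotopyLieAlgebra}), I would first assume $R$ is complete, fix a minimal Cohen presentation $P\to R$, and fix a minimal model $P[W]\xra{\simeq} R$.

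Next I would apply \cref{l:WeakCat} to each $\vp_i\colon R\to S_i$, using the one fixed model $P[W]\xra{\simeq}R$, to obtain: a minimal Cohen presentation $Q_i\to S_i$ with $Q_i$ a regular quotient of $P$; a minimal model $Q_i[U_i,X_i]\xra{\simeq}S_i$ over $Q_i$; and a lift $\tilde\vp_i\colon P[W]\to Q_i[U_i,X_i]$ of $\vp_i$ extending the surjection $P\to Q_i$, with $U_i\subseteq W$, $\tilde\vp_i(w)=w$ for $w\in U_i$, and $\tilde\vp_i(w)=0$ for $w\in W\smallsetminus U_i$. Set $Q=Q_1\otimes_P Q_2$. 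Using that $\vp_1,\vp_2$ are Tor-independent one checks, as in \cref{thm:SurjPiIsoAlmostSmall}, that $Q$ is a regular quotient of $P$ and that $Q\to S$ is a minimal Cohen presentation. The technical heart is then to manufacture a minimal model of $S$ over $Q$ from this data: the canonical map $Q_1[U_1,X_1]\otimes_{P[W]}Q_2[U_2,X_2]\to S$ is a quasi-isomorphism because $S\simeq S_1\lotimes_R S_2$ and $P[W]\xra{\simeq}R$; its source is identified with a semifree extension $Q[Z]$ of $Q$ whose variables assemble from $X_1$, $X_2$ and the degree-one variables of $W$ that survive; and minimality of the induced differential on $k[Z]$ is inherited from the minimality of $k[U_i,X_i]$, just as in \cref{thm:SurjPiIsoAlmostSmall}. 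This produces a commutative square of minimal models, with respect to minimal Cohen presentations, for $R$, $S_1$, $S_2$ and $S$.

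Applying $k\otimes-$ converts this square into a commutative square of minimal semifree extensions over $k$ whose horizontal maps collapse the variables absent from their targets, fix the surviving degree-one variables, and fix the variables $X_i$. By \cref{c:dualmaps}, passing to $\ind_k$, shifting, and taking $k$-linear duals turns it into the commutative square of graded Lie algebras relating $\pi^{\geqslant2}(R)$, $\pi^{\geqslant2}(S_1)$, $\pi^{\geqslant2}(S_2)$ and $\pi^{\geqslant2}(S)$, in which the kernel of $\pi^{\geqslant2}(\vp_1)$ and the kernel of the parallel map out of $\pi^{\geqslant2}(S)$ both have a $k$-basis of derivations dual to the extra variables $X_1$, the latter map is surjective, and $\pi^{\geqslant2}(\psi_2)$ carries the one kernel isomorphically onto the other because the relevant model map is the identity on $X_1$. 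Then \cref{pullbacklemma} gives $\pi^{\geqslant2}(S)\cong\pi^{\geqslant2}(S_1)\times_{\pi^{\geqslant2}(R)}\pi^{\geqslant2}(S_2)$; combining this with the degree-one isomorphism of \cref{l:pi1} (whose hypothesis holds since Tor-independence forces $I_1\cap I_2\subseteq\m_R^2$), and observing that both are restrictions of the canonical Lie algebra map $\pi(S)\to\pi(S_1)\times_{\pi(R)}\pi(S_2)$, finishes the argument.

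The step I expect to be the main obstacle is the construction of the minimal model of $S$ over $Q$ together with the bookkeeping of which variables are shared among the four minimal models. In \cref{thm:SurjPiIsoAlmostSmall} this was transparent because almost smallness of one $\vp_i$ forced its minimal model over $Q_i$ to contain \emph{all} of $W$ with $\tilde\vp_i$ the identity there, so the tensor product collapsed cleanly to $Q[X,Y]$. Here \cref{l:WeakCat} only gives $\tilde\vp_i$ equal to the identity on the subset $U_i\subseteq W$ and zero off it, so one must argue carefully --- and this is where Tor-independence is used most heavily --- that after tensoring the surviving degree-one variables together with the new variables $X_i$ still assemble into a genuine minimal model over $Q$, and that the induced maps on $k$-indecomposables retain the injectivity and surjectivity needed to invoke \cref{pullbacklemma}; working this out (e.g.\@ by instead lifting a minimal model of the quotient $S_1\to S$ along $Q_1[U_1,X_1]\xra{\simeq}S_1$ via \cref{c:AdjoinVariables} and \cref{p:parallelmaps}, and only then descending to $Q$) is the crux of the proof.
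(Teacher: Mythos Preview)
Your overall strategy matches the paper's: complete, apply \cref{l:WeakCat} to each $\vp_i$ against a fixed minimal model $P[W]\xra{\simeq}R$ to get minimal models $Q_1[U,X]\xra{\simeq}S_1$ and $Q_2[V,Y]\xra{\simeq}S_2$ with $U,V\subseteq W$, tensor over $P[W]$ to produce a minimal model of $S$ over $Q=Q_1\otimes_P Q_2$ on variables $U\cap V,\,X,\,Y$, and compare indecomposables. (Minor slip: the variables of $W$ that survive are those in $U\cap V$ in \emph{all} degrees, not just degree one.) The genuine gap is your appeal to \cref{pullbacklemma}: that lemma requires $\pi^{\geqslant 2}(\psi_1)$ to be surjective, equivalently the map of indecomposables $kV\oplus kY\to k(U\cap V)\oplus kX\oplus kY$ to be injective; but its kernel is $k(V\smallsetminus U)$, which need not vanish. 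In \cref{thm:SurjPiIsoAlmostSmall} almost-smallness forced $U=W$, so $V\subseteq U$ automatically and \cref{pullbacklemma} applied; here neither $\vp_i$ is assumed almost small and your surjectivity claim fails in general.

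The paper's remedy is simpler than the workaround you sketch at the end: it bypasses \cref{pullbacklemma} altogether and instead observes that the commutative square of indecomposables with corners $kW$, $kU\oplus kX$, $kV\oplus kY$, and $(kU\cap kV)\oplus kX\oplus kY$ is a \emph{pushout} of $k$-vector spaces---an elementary check once the maps $w\mapsto w$ or $w\mapsto 0$ are written down. Shifting and taking $k$-linear duals turns a pushout into a pullback, yielding $\pi^{\geqslant 2}(S)\cong\pi^{\geqslant 2}(S_1)\times_{\pi^{\geqslant 2}(R)}\pi^{\geqslant 2}(S_2)$ directly via \cref{c:dualmaps}, with no need for any surjectivity hypothesis. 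Combining with \cref{l:pi1} in degree one, exactly as you say, completes the proof.
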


\begin{proof}
Without loss of generality we can assume $R$ is complete, and let $P\to R$ be a minimal Cohen presentation. By \cref{l:WeakCat},  there exist minimal semifree  extensions $Q_1[U, X]$ and $Q_2[V,Y]$ where
 $U, V \subseteq W$, that fit into commutative diagrams
\begin{center}
\begin{tikzpicture}[baseline=(current  bounding  box.center)]
 \matrix (m) [matrix of math nodes,row sep=3em,column sep=4em,minimum width=2em] {
P[W]&Q_1[U,X]\\
&S_1\\};
 \path[->] (m-1-1) edge node[above]{ } (m-1-2);
 \path[->] (m-1-2) edge node[right]{ } (m-2-2);
 \path[->] (m-1-1) edge node[below]{$\simeq$} (m-2-2);
 \end{tikzpicture}\quad
 \begin{tikzpicture}[baseline=(current  bounding  box.center)]
 \matrix (m) [matrix of math nodes,row sep=3em,column sep=4em,minimum width=2em] {
P[W]&Q_2[V,Y]\\
&S_2\\};
 \path[->] (m-1-1) edge node[above]{ } (m-1-2);
 \path[->] (m-1-2) edge node[right]{ } (m-2-2);
  \path[->] (m-1-1) edge node[below]{$\simeq$} (m-2-2);
 \end{tikzpicture}
 \end{center} 
where the horizontal maps are the DG algebra maps determined by 
\[
w\mapsto \begin{cases}
w & w\in U \\
0 & w\notin U
\end{cases} \ \text{ and } \ w\mapsto \begin{cases}
w & w\in V \\
0 & w\notin V\,,
\end{cases}
\] respectively, and the vertical maps are the augmentation maps. Because $\vp_1, \vp_2$ are Tor-independent we have the following quasi-isomorphism 
\[
Q_1[U, X] \otimes_{P[W]} Q_2[V, Y] \xra{\simeq} S\,.
\]
Furthermore, as the kernels of the maps $P[W] \to Q_1[U, X]$ and $P[W] \to Q_2[V, Y]$ are each generated by a part of a minimal generating set for $\m_P$ together with a subset of $W$, it follows that the DG algebra $Q_1[U, X] \otimes_{P[W]} Q_2[V, Y]$ is a semifree  extension of $Q=Q_1\otimes_P Q_2$. Finally, since $Q_1[U, X]$ and $Q_2[V, Y]$ are minimal, we conclude  that $Q_1[U, X] \otimes_{P[W]} Q_2[V, Y]$ is a minimal model of $S$ over $Q$.

Therefore, taking indecomposables induces the following commutative diagram:
\begin{equation}\label{eq:pushout}
\begin{tikzpicture}[baseline=(current  bounding  box.center)]
 \matrix (m) [matrix of math nodes,row sep=3em,column sep=4em,minimum width=2em] {
kW&kU\oplus kX\\
kV\oplus kY&(kU\cap kV)\oplus kX\oplus kY\\};
 \path[->] (m-1-1) edge  (m-1-2);
 \path[->] (m-1-2) edge  (m-2-2);
 \path[->] (m-1-1) edge (m-2-1);
  \path[->] (m-2-1) edge (m-2-2);
 \end{tikzpicture}
\end{equation}
where maps are the identity when restricted to $kX$ or $kY$ and the obvious projection onto the first component when restricted to $kW$, $kU$, or $kV$. Following \cref{c:dualmaps}, upon shifting and taking $k$-linear duals one obtains the following isomorphism of Lie algebras, since \cref{eq:pushout} is a pushout diagram of $k$-vector spaces: 
\begin{equation}\label{e:isofiber}
\pi^{\geqslant{2}}(S)\cong \pi^{\geqslant2}(S_1)\times_{\pi^{\geqslant 2}(R)}\pi^{\geqslant 2}(S_2)\,.
\end{equation} Again using that $\vp_1, \vp_2$ are Tor-independent, we have that $I_1 \cap I_2 = I_1I_2$. Therefore one can apply \cref{l:pi1}, finishing the proof of the desired result when combined with \cref{e:isofiber}.
\end{proof}

By  \cite[Theorem~5.6]{AlgebraRetracts}, if $\vp_i$ is almost small, then it has finite weak category. So in light of this point and  \cref{thm:surjPiIsochar0}, the next question is prompted:
\begin{question}
Can one replace the assumption that at least one of $\vp_i$ is almost small with the assumption that at least one $\vp_i$ has finite weak category in \cref{thm:SurjPiIsoAlmostSmall}?
\end{question}

\begin{remark}
Let $\vp\colon R\rightarrow S$ be a not necessarily surjective map of local rings. Consider a Cohen factorization of (the completion of) $\vp$ 
\[
\widehat R\xra{\dot{\vp}}R'\xra{\vp'}\widehat{S}\,.
\]
Assume there is an isomorphism $S_1\otimes_{R'}S_2\cong\widehat S$ where $S_1,S_2$ are Tor-independent quotients of $R'$, and that one of the following hold:
\begin{enumerate}
\item The characteristic of the residue field of $S$ is zero;

\item The maps $R'\to S_i$ have finite weak category for $i=1,2$.
\end{enumerate}
From \cref{HomotopyLieAlgebra} and \Cref{thm:surjPiIsochar0} it follows that 
\[
\pi^{\geqslant2}(S)\cong\pi^{\geqslant2}(S_1)\times_{\pi^{\geqslant2}(R)}\pi^{\geqslant2}(S_2).
\]
\end{remark}

\section{Applications}\label{sec:applications}
In this section we list several consequences of \cref{thm:SurjPiIsoAlmostSmall,thm:surjPiIsochar0}. The first corollary, which is \cref{introthm2} from the introduction, recovers \cite[Corollary~5.6(b)]{small} when one of the Tor-independent maps is small. First, a bit of notation. 

\begin{chunk}\label{c:deviations}For a local ring $A$ with residue field $k$, recall the Poincar\'{e} series of $k$ over $A$ is the formal power series
\[
 \mathrm{P}^A_k(t)= \sum_{i\geqslant 0} \dim_k (\Ext_A^i(k,k))t^i\,.
\]The number $\ve_i(A)\colonequals \dim_k \pi^{i}(A)$ is called the $i^{\text{th}}$ deviation of $A$. The sequence of deviations are encoded in the Poincar\'e series  of $k$ over $A$ according to the equality
\[\mathrm{P}^A_k(t) = \prod_{i=1}^\infty\frac{(1+t^{2i-1})^{\ve_{2i-1}(A)}}{(1-t^{2i})^{\ve_{2i}(A)}}\,;\]  for this equality see \cite[7.1]{IFR}
\end{chunk}

\begin{corollary}\label{cor:PSeriesAS}
Suppose $S_1\xla{\vp_1}R\xra{\vp_2} S_2$ is a pair of Tor-independent surjective local ring maps with residue field $k$, and set $S=S_1\otimes_R S_2$. If at least one $\vp_i$ is almost small, then 
\[
\mathrm{P}^{S}_k(t)=\frac{\mathrm{P}^{S_1}_k(t)\cdot \mathrm{P}^{S_2}_k(t)}{\mathrm{P}^R_k(t)}.
\]
\end{corollary}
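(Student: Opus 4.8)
The plan is to deduce the Poincar\'e series identity directly from the Lie algebra isomorphism $\pi(S)\cong \pi(S_1)\times_{\pi(R)}\pi(S_2)$ supplied by \cref{thm:SurjPiIsoAlmostSmall}, via the deviations. Recall from \cref{c:deviations} that the Poincar\'e series of $k$ over a local ring $A$ is completely determined by the deviations $\ve_i(A)=\dim_k\pi^i(A)$ through the product formula
\[
\mathrm{P}^A_k(t)=\prod_{i=1}^\infty\frac{(1+t^{2i-1})^{\ve_{2i-1}(A)}}{(1-t^{2i})^{\ve_{2i}(A)}}\,.
\]
Taking this formula at face value, dividing $\mathrm{P}^{S_1}_k(t)\cdot\mathrm{P}^{S_2}_k(t)$ by $\mathrm{P}^{R}_k(t)$ multiplies the exponents, so the right-hand side equals
\[
\prod_{i=1}^\infty\frac{(1+t^{2i-1})^{\ve_{2i-1}(S_1)+\ve_{2i-1}(S_2)-\ve_{2i-1}(R)}}{(1-t^{2i})^{\ve_{2i}(S_1)+\ve_{2i}(S_2)-\ve_{2i}(R)}}\,.
\]
Hence the desired equality is equivalent to the numerical identity
\[
\ve_i(S)=\ve_i(S_1)+\ve_i(S_2)-\ve_i(R)\qquad\text{for all }i\geqslant 1\,.
\]

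First I would establish this deviation identity. For $i\geqslant 2$, the isomorphism $\pi^{\geqslant 2}(S)\cong \pi^{\geqslant 2}(S_1)\times_{\pi^{\geqslant 2}(R)}\pi^{\geqslant 2}(S_2)$ from \cref{eq:iso1} is an isomorphism of graded Lie algebras, so in each cohomological degree $i\geqslant 2$ it gives an isomorphism of $k$-vector spaces
\[
\pi^i(S)\cong \pi^i(S_1)\times_{\pi^i(R)}\pi^i(S_2)\,.
\]
Because the structure maps $\pi^i(\vp_1)\colon\pi^i(S_1)\to\pi^i(R)$ and $\pi^i(\vp_2)\colon\pi^i(S_2)\to\pi^i(R)$ are surjective in degrees $i\geqslant 2$ (this surjectivity in the almost small case is exactly what makes the diagram \cref{eq:liealgebras} work; it is the content of \cref{c:almostsmall} applied to the almost small map, and for the other map it follows from the pullback description), the dimension of a fiber product of finite-dimensional vector spaces along a surjection obeys
\[
\dim_k\big(\pi^i(S_1)\times_{\pi^i(R)}\pi^i(S_2)\big)=\dim_k\pi^i(S_1)+\dim_k\pi^i(S_2)-\dim_k\pi^i(R)\,,
\]
which yields $\ve_i(S)=\ve_i(S_1)+\ve_i(S_2)-\ve_i(R)$ for $i\geqslant 2$. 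For $i=1$ the same identity is precisely \cref{rem:DimCount} (equivalently the $k$-space isomorphism \cref{l:pi1} together with surjectivity of $\overline\vp_1,\overline\vp_2$), since $\ve_1(A)=\dim_k\m_A/\m_A^2$. Substituting all these equalities into the exponents of the product formula and reading the computation above in reverse completes the proof.

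The step I expect to require the most care is the bookkeeping on the vector-space fiber products, specifically confirming that the relevant structure maps are surjective in every degree $i\geqslant 2$ so that the naive dimension count $\dim(\h_1\times_\g\h_2)=\dim\h_1+\dim\h_2-\dim\g$ is valid; without surjectivity one only gets an inequality. This is not a genuine obstacle here because surjectivity of $\pi^{\geqslant 2}(\vp_1)$ is built into the almost small hypothesis and surjectivity of $\pi^{\geqslant 2}(\psi_1)$ (hence of the second leg after passing through the isomorphism) is noted in the proof of \cref{thm:SurjPiIsoAlmostSmall}; one also needs each $\ve_i$ finite, which holds since the rings are noetherian local. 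A secondary point is simply to be careful that the product formula of \cref{c:deviations} may be manipulated termwise as formal power series, which is immediate since the exponents are finite. Beyond these routine checks the argument is a direct translation of the Lie algebra statement into generating-function form.
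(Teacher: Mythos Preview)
Your approach is essentially the same as the paper's: reduce the Poincar\'e series identity to the deviation identity $\ve_i(S)=\ve_i(S_1)+\ve_i(S_2)-\ve_i(R)$, obtain this for $i\geqslant 2$ from the pullback isomorphism of \cref{thm:SurjPiIsoAlmostSmall} via a dimension count on the fiber product, and handle $i=1$ by \cref{l:pi1}. The paper packages the $i\geqslant 2$ step as the short exact sequence
\[
0\to \pi^{\geqslant 2}(S_1)\times_{\pi^{\geqslant 2}(R)}\pi^{\geqslant 2}(S_2)\to \pi^{\geqslant 2}(S_1)\times\pi^{\geqslant 2}(S_2)\to \pi^{\geqslant 2}(R)\to 0,
\]
which is the same linear algebra.

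One correction: you assert that \emph{both} $\pi^i(\vp_1)$ and $\pi^i(\vp_2)$ are surjective for $i\geqslant 2$, and that surjectivity of the second ``follows from the pullback description.'' That is not true in general---only the almost small map is guaranteed to induce a surjection on $\pi^{\geqslant 2}$; in \cref{eq:liealgebras} the surjective maps are $\pi^{\geqslant 2}(\vp_1)$ and $\pi^{\geqslant 2}(\psi_1)$, not $\pi^{\geqslant 2}(\vp_2)$. Fortunately your dimension formula $\dim(\h_1\times_{\g}\h_2)=\dim\h_1+\dim\h_2-\dim\g$ requires only \emph{one} of the legs to surject onto $\g$ (project the fiber product onto the other factor and identify the kernel), so the argument goes through once you drop the unjustified claim about the second map.
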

\begin{proof}
By \cref{thm:SurjPiIsoAlmostSmall}, there is the following exact sequence of graded Lie algebras over $k$:
\[
0\to \pi^{\geqslant2}(S_1)\times_{\pi^{\geqslant2}(R)}\pi^{\geqslant2}(S_2)\to \pi^{\geqslant2}(S_1)\times \pi^{\geqslant2}(S_2)\to \pi^{\geqslant2}(R)
\] where the map on the right is the difference of the induced maps on homotopy Lie algebras. Under the assumptions of the corollary, we deduce that the map on the right is surjective and therefore
\[
\varepsilon_i(S)=\varepsilon_i(S_2)+\varepsilon_i(S_2)-\varepsilon_i(R)
\] for all $i\geqslant 2.$ Combining this with \cref{l:pi1} we obtain the desired result on Poincar\'{e} series. 
\end{proof}
Notice the conclusion of \cref{cor:PSeriesAS} is only asserted under one of the  three conditions in \cref{introthm1}, while some of the following corollaries (for example,  \cref{cor:TorAlg}) hold under any of the conditions listed in \cref{introthm1}. In light of this point we pose the following question. 
\begin{question}\label{question}
In the notation of  \cref{cor:PSeriesAS}, does the conclusion of \cref{cor:PSeriesAS} hold if the characteristic of $k$ is zero, or if both $\vp_1,\vp_2$ have finite weak category?
\end{question}

\begin{remark}
The  conclusion of \cref{cor:PSeriesAS} also holds if at least one of the maps $\psi_i$ is large, under the standing assumption that $S_1,S_2$ are Tor-independent.

Indeed, if $\psi_1$ is large, then the induced map
$\Tor^{S_1}(S,k) \to \Tor^S(k,k)$ is injective; see \cite[Theorem 1.1]{levin}. 
Furthermore, from following commutative diagram 
\begin{equation*}
\begin{tikzpicture}[baseline=(current  bounding  box.center)]
 \matrix (m) [matrix of math nodes,row sep=3em,column sep=4em,minimum width=3em] {
\Tor^R(S_2,k)&\Tor^{S_1}(S,k)\\
\Tor^R(k,k)&\Tor^{S_1}(k,k),\\};
\path[->] (m-1-1) edge  node[above]{$\cong$} (m-1-2);
\path[->] (m-1-1) edge (m-2-1);
\path[->] (m-1-2) edge  (m-2-2);
\path[->] (m-2-1) edge (m-2-2);
\end{tikzpicture}
\end{equation*}
it follows that the map $\Tor^R(S_2,k) \to \Tor^R(k,k)$ is injective. Therefore $\vp_1$ is also large.  Finally by \cite[Theorem 1.1]{levin}, one has the following equalities on Poincar\'{e} series
\[\mathrm{P}^R_k(t)=\mathrm{P}^R_{S_2}(t)\cdot \mathrm{P}^{S_2}_k(t) \ \text{ and } \ \mathrm{P}^{S_1}_k(t)=\mathrm{P}^{S_1}_S(t)\cdot \mathrm{P}^S_k(t)\,,\] from which the desired equality is easily deduced. 
\end{remark}

\begin{chunk}\label{rem:hopf}
We denote by $\hopf$ the category of nonnegatively graded cocomutative Hopf $k$-algebras with codivided powers. That is, those Hopf algebras $A$ whose dual $\Hom_k(A,k)$ is a nonnegatively graded commutative Hopf algebra with divided powers; see \cite{Andre} for more details. This category becomes relevant to the present discussion as the universal enveloping algebra functor takes values in $\hopf$ and  restricts to an equivalence of categories when specializing to positively graded finite type (adjusted) Lie algebras over $k$; cf.\@ \cite{Andre,MM,Sjodin}.
\end{chunk}

The next result is now an immediate consequence of \cref{introthm1} and \cref{rem:hopf}. 

\begin{corollary}\label{cor:extalgebras}
Let    $S_1\leftarrow R\to S_2$ be a pair of Tor-independent surjective local ring maps, and set $S=S_1\otimes_R S_2.$ If at least one of the conditions in \cref{introthm1} holds, then $\Ext_{S}(k,k)$ is the pullback in $\hopf$ along the naturally induced maps $\Ext_{S_1}(k,k)\to \Ext_R(k,k)$ for $i=1,2.$
\end{corollary}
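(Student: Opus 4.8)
The plan is to deduce this corollary directly from \cref{introthm1} by transporting the pullback of graded Lie algebras through the universal enveloping algebra functor into the category $\hopf$. First I would recall from \cref{HomotopyLieAlgebra} that $\Ext_A(k,k)$ is the universal enveloping algebra of $\pi(A)$ for each of $A=R,S_1,S_2,S$, and that by \cref{rem:hopf} the universal enveloping algebra functor restricts to an equivalence of categories between positively graded finite type (adjusted) Lie $k$-algebras and the corresponding subcategory of $\hopf$. The finite type hypothesis is satisfied here because $R,S_1,S_2,S$ are noetherian local rings, so each deviation $\varepsilon_i$ is finite; the induced maps $\pi(S_i)\to\pi(R)$ and $\pi(S)\to\pi(S_i)$ come from the surjections $\vp_i$ and $\psi_i$ as described in \cref{HomotopyLieAlgebra}.

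Next I would observe that an equivalence of categories preserves limits; in particular it sends the pullback $\pi(S_1)\times_{\pi(R)}\pi(S_2)$, formed in the category of (adjusted) Lie $k$-algebras, to the pullback of $\Ext_{S_1}(k,k)$ and $\Ext_{S_2}(k,k)$ over $\Ext_R(k,k)$, formed in $\hopf$. Here one needs to know that the pullback of finite type adjusted Lie algebras exists and is again of finite type, which follows since it is a Lie subalgebra of the (degreewise finite dimensional) product $\pi(S_1)\times\pi(S_2)$; the equivalence then guarantees the corresponding pullback exists in $\hopf$ and is computed by applying the enveloping algebra functor. Combining this with the isomorphism $\pi(S)\cong\pi(S_1)\times_{\pi(R)}\pi(S_2)$ of \cref{introthm1}, and the fact that this isomorphism is induced by the natural maps (so that it is compatible with the structure maps to $\pi(S_i)$ and $\pi(R)$), we conclude that $\Ext_S(k,k)$ is the pullback in $\hopf$ along the maps $\Ext_{S_i}(k,k)\to\Ext_R(k,k)$.

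The only real subtlety, and the step I expect to require the most care, is checking that all four objects genuinely lie in the subcategory on which the universal enveloping algebra functor is an equivalence, and that the pullback taken in $\hopf$ agrees with the one obtained from the equivalence rather than with some a priori larger or smaller object; in other words, that $\hopf$ admits this pullback and that the forgetful comparison is harmless. This is handled by the finite type remarks above together with the cited results of André, Milnor--Moore, and Sjödin recorded in \cref{rem:hopf}. Everything else is formal: equivalences preserve finite limits, and the maps in sight are the natural ones, so the naturally induced map $\Ext_S(k,k)\to\Ext_{S_1}(k,k)\times_{\Ext_R(k,k)}\Ext_{S_2}(k,k)$ is an isomorphism in $\hopf$.
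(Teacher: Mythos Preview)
Your proposal is correct and follows exactly the approach the paper intends: the paper does not write out a proof at all, stating only that the corollary ``is now an immediate consequence of \cref{introthm1} and \cref{rem:hopf}.'' Your expansion---identifying $\Ext_A(k,k)$ with the universal enveloping algebra of $\pi(A)$, invoking the equivalence of categories recorded in \cref{rem:hopf}, and noting that equivalences preserve pullbacks---is precisely the argument implicit in that sentence, with the finite-type verification and the existence of the pullback in the subcategory being the only details one might want to spell out.
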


\begin{corollary}\label{cor:TorAlg}
With the notation and assumptions in \cref{cor:extalgebras}, there is an isomorphism of graded commutative Hopf algebras with divided powers
\[
\mathrm{Tor}^S(k,k)\cong\mathrm{Tor}^{S_1}(k,k)\otimes_{\mathrm{Tor}^R(k,k)}\mathrm{Tor}^{S_2}(k,k)\,.
\]
\end{corollary}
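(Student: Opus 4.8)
The plan is to deduce this purely formally from Corollary \ref{cor:extalgebras} by applying $k$-linear duality. Recall from \cref{HomotopyLieAlgebra} that for a local ring $A$ with residue field $k$, the algebra $\Ext_A(k,k)$ is the universal enveloping algebra of the homotopy Lie algebra $\pi(A)$, and it lives in the category $\hopf$ of nonnegatively graded cocommutative Hopf $k$-algebras with codivided powers (see \cref{rem:hopf}). Its graded $k$-linear dual is $\Tor^A(k,k)$, which is then a nonnegatively graded \emph{commutative} Hopf algebra with divided powers. So the first step is simply to record that the functor $\Hom_k(-,k)$ carries $\Ext_S(k,k)$, $\Ext_{S_1}(k,k)$, $\Ext_R(k,k)$, $\Ext_{S_2}(k,k)$ to $\Tor^S(k,k)$, $\Tor^{S_1}(k,k)$, $\Tor^R(k,k)$, $\Tor^{S_2}(k,k)$, respectively, together with the dual Hopf-algebra maps.

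The second step is to check that this duality functor converts the pullback from \cref{cor:extalgebras} into the claimed pushout (tensor product). Here one uses that each $\Ext_A(k,k)$ is of finite type in each homological degree — which holds because $A$ is noetherian local with residue field $k$ — so that graded $k$-linear duality is an (anti-)equivalence on the relevant categories and in particular is exact and turns finite limits into finite colimits. Concretely, the pullback $\Ext_{S_1}(k,k)\times_{\Ext_R(k,k)}\Ext_{S_2}(k,k)$ in $\hopf$ has underlying graded vector space the honest pullback of vector spaces, and dualizing a pullback of finite-type graded vector spaces along a surjection yields the corresponding pushout; the surjectivity needed is exactly the surjectivity of $\pi^{\geqslant 2}(\vp_i)$-type maps already extracted in the proofs of \cref{thm:SurjPiIsoAlmostSmall} and \cref{thm:surjPiIsochar0} (and, on the Hopf-algebra side, the surjectivity of $\Ext_{S_1}(k,k)\to\Ext_R(k,k)$ or the relevant $\Tor$ map). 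Thus $\Tor^S(k,k)$ is the pushout of $\Tor^{S_1}(k,k)\leftarrow \Tor^R(k,k)\to \Tor^{S_2}(k,k)$ in the category of graded commutative Hopf algebras with divided powers, i.e. the tensor product $\Tor^{S_1}(k,k)\otimes_{\Tor^R(k,k)}\Tor^{S_2}(k,k)$, with its natural Hopf structure.

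I expect the only genuine subtlety to be the compatibility of the duality with the divided-power/codivided-power structure and the verification that pushouts in the category of graded commutative Hopf algebras with divided powers are computed by the ordinary tensor product over the base — in other words, that no completion or coproduct-completion issue intervenes. This is controlled by the finite-type hypothesis and by the equivalence of categories of Andr\'e--Milnor--Moore--Sj\"odin recalled in \cref{rem:hopf}, which matches $\hopf$ with the category of positively graded finite-type adjusted graded Lie algebras; under this equivalence the pullback of Lie algebras $\pi(S_1)\times_{\pi(R)}\pi(S_2)$ of \cref{introthm1} corresponds to the pullback of universal enveloping algebras, and dualizing gives the tensor product on $\Tor$. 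Beyond that, everything is formal, so the argument is short: invoke \cref{cor:extalgebras}, dualize, and identify the resulting colimit with the stated tensor product.
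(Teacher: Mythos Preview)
Your approach is essentially identical to the paper's: apply $\Hom_k(-,k)$ to the pullback in $\hopf$ from \cref{cor:extalgebras}, observe that this functor sends pullbacks to pushouts and carries $\Ext$ to $\Tor$ (the paper cites \cite[Theorem~2.3.3]{GL} for the Hopf duality), and identify the resulting pushout with the stated tensor product. The paper's proof is three sentences and simply asserts that the duality functor takes pullbacks to pushouts without discussing the finite-type or surjectivity issues you raise; your additional care about those points is not wrong, but it is not something the paper engages with.
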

\begin{proof}
Consider the functor $\mathrm{Hom}_k(-,k)$ from $\hopf$ to the category of graded commutative Hopf algebras with divided powers. This functor takes pullbacks to pushouts.  
Moreover Tor algebras and Ext algebras are Hopf $k$-dual of each other, see \cite[Theorem 2.3.3]{GL}.
Therefore the corollary follows from \cref{cor:extalgebras}.
\end{proof}

\Cref{cor:TorAlg} generalizes  results of Avramov in \cite[Theorem~2]{tensor} and \cite[Corollary~5.6(a)]{small}.

Before stating the next Corollary, we recall the definition of depth for modules over a (not necessarily commutative) algebra.
Let $\mathcal{A}$ be a nonnegatively graded connected $k$-algebra and $\mathcal{M}$ a graded left $\mathcal{A}$-module. The \emph{depth} of $\mathcal{M}$ over $\mathcal{A}$ is
\[
\mathrm{depth}_{\mathcal{A}}\;\mathcal{M}=\mathrm{inf}\{n\geqslant0\mid\Ext_\mathcal{A}^n(k,\mathcal{M})\neq0\}.
\]

\begin{corollary}\label{cor:stable}
Let $S_1\leftarrow R\to S_2$ be a pair of Tor-independent minimal Cohen presentations, and set $S=S_1\otimes_R S_2$. If $S_1,S_2$ are singular local rings, then \[
\mathrm{depth}\;\Ext_S(k,k)=\mathrm{depth}\;\Ext_{S_1}(k,k)+\mathrm{depth}\;\Ext_{S_2}(k,k)\geqslant2.
\]
\end{corollary}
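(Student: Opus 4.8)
The plan is to combine the Hopf-algebra pullback description of $\Ext_S(k,k)$ from \cref{cor:extalgebras} with a general lemma computing the depth of a pullback of graded connected algebras along surjections. First I would record the key reduction: since $S_1 \leftarrow R \to S_2$ are minimal Cohen presentations, the ring $R$ is regular, so $\Ext_R(k,k)$ is an exterior algebra on $\pi^1(R)$, and more importantly $\pi^{\geqslant 2}(R) = 0$. Consequently the maps $\Ext_{S_i}(k,k) \to \Ext_R(k,k)$ are surjective (they are surjective on the degree-one part by the Cohen presentation hypothesis, hence surjective since the source generates), and on homotopy Lie algebras $\pi^{\geqslant 2}(\vp_i)$ is the zero map with $\pi^{\geqslant 2}(S) \cong \pi^{\geqslant 2}(S_1) \times \pi^{\geqslant 2}(S_2)$ by \cref{thm:surjPiIsochar0}/\cref{thm:SurjPiIsoAlmostSmall} applied to the regular source (note almost smallness is automatic when $R$ is regular). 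Dually, $\Ext_S(k,k)$ is the pullback of $\Ext_{S_1}(k,k) \to \Ext_R(k,k) \leftarrow \Ext_{S_2}(k,k)$, and since the two kernels are $\Ext$-algebra ideals whose quotient is the same exterior algebra $\Lambda = \Ext_R(k,k)$, one gets a description $\Ext_S(k,k) \cong \Ext_{S_1}(k,k) \otimes_\Lambda \Ext_{S_2}(k,k)$ as an algebra (this is what \cref{cor:TorAlg} says, dualized).

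Next I would prove the depth formula at the level of these algebras. Write $E = \Ext_S(k,k)$, $E_i = \Ext_{S_i}(k,k)$, $\Lambda = \Ext_R(k,k)$. The kernel $K_i := \ker(E_i \to \Lambda)$ is a two-sided ideal, and since $E_i$ is free as a right (and left) module over the normal subalgebra generated in positive internal structure... more concretely, I would use that $E_i$ decomposes, as a graded $k$-vector space and compatibly with the coalgebra structure, as $K_i$ together with a complement realizing $\Lambda$; the pullback $E$ then has the property that its augmentation ideal $E_{>0}$ and a minimal $k$-basis of generators split as (generators of $K_1$) $\sqcup$ (generators of $K_2$) $\sqcup$ (generators of $\Lambda$) — in fact the degree-$\geqslant 2$ part is literally $\pi^{\geqslant 2}(S_1) \times \pi^{\geqslant 2}(S_2)$ from the homotopy Lie algebra splitting. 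The cleanest route to depth is the change-of-rings / Ext computation: one shows that a minimal injective-type resolution, or more simply the Yoneda Ext, satisfies $\Ext_E(k,k) \cong \Ext_{E_1}(k,k) \otimes_{\Ext_\Lambda(k,k)} \Ext_{E_2}(k,k)$ appropriately, but for depth it is enough to track the bigraded vector space $\Ext_E^n(k,E)$. I would instead invoke the standard fact (see Avramov–Levin / Avramov's work on stable cohomology, or \cite{small}) that for such fibre-product/tensor situations $\operatorname{depth} \Ext_S(k,k)$ is additive; the mechanism is that $\Ext_E(k,E)$ is computed by a Künneth-type formula $\Ext_{E_1}(k,E_1) \otimes_{\text{(something over } \Lambda\text{)}} \Ext_{E_2}(k,E_2)$, and since $\Lambda$ is a finite-dimensional self-injective (Frobenius) algebra — it is an exterior algebra — the relevant derived tensor product behaves like an honest tensor product in the relevant range, forcing $\inf\{n : \Ext_E^n(k,E) \neq 0\} = \inf\{n : \Ext_{E_1}^n(k,E_1)\neq 0\} + \inf\{n : \Ext_{E_2}^n(k,E_2)\neq 0\}$.

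Finally I would establish the inequality $\operatorname{depth} \Ext_{S_i}(k,k) \geqslant 1$ for each $i$, which gives the claimed bound $\geqslant 2$. For this, note $S_i$ is singular, so $I_i = \ker(\vp_i) \not\subseteq \m_R^2$ is impossible — rather $I_i \neq 0$ and (being a minimal Cohen presentation) $I_i \subseteq \m_R^2$, so $\pi^{\geqslant 2}(S_i) \neq 0$, i.e. $E_i = \Ext_{S_i}(k,k)$ is not an exterior algebra, equivalently $E_i$ is infinite-dimensional. For any such graded connected noetherian-type $k$-algebra coming as $\Ext$ of a local ring, $\Hom_{E_i}(k, E_i) = 0$: an element of $E_i$ annihilated by all of the positive-degree part would, by the Hopf-algebra (in particular, the $\Ext_R(k,k)$-comodule / Nakayama) structure, have to be a nonzero socle element, but $E_i$ being the universal enveloping algebra of a graded Lie algebra with nonzero positive-degree part has no such socle — a Poincaré–Birkhoff–Witt argument shows the positive part acts without finite-dimensional socle. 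Hence $\operatorname{depth}_{E_i} E_i \geqslant 1$, and additivity finishes the proof.

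The main obstacle I anticipate is the additivity step: turning the algebra isomorphism $\Ext_S(k,k) \cong \Ext_{S_1}(k,k) \otimes_{\Ext_R(k,k)} \Ext_{S_2}(k,k)$ into a clean statement about $\Ext_{\,\cdot\,}(k, \,\cdot\,)$ requires knowing that $\Ext_{S_i}(k,k)$ is flat (or free) as a module over the exterior subalgebra $\Ext_R(k,k)$ in the appropriate one-sided sense, and that the base change $\Ext_R(k,k) \to k$ is well-behaved — the PBW theorem for these enveloping algebras, together with the observation that $\pi(R)$ is a Lie ideal direct summand of $\pi(S_i)$ with abelian (exterior) quotient, should supply exactly this, but writing it carefully is where the real work lies. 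An alternative, possibly cleaner, route would be to cite the corresponding result for small homomorphisms in \cite{small} and simply note that when $R$ is regular every minimal Cohen presentation $\vp_i$ is small, reducing \cref{cor:stable} to the case treated there; I would mention this as the fallback argument.
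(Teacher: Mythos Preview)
You correctly isolate the essential structural fact: since $R$ is regular, $\pi^{\geqslant 2}(R)=0$ and the pullback collapses to a genuine product $\pi^{\geqslant 2}(S)\cong \pi^{\geqslant 2}(S_1)\times\pi^{\geqslant 2}(S_2)$. But from there your route diverges from the paper's and runs into the very obstacle you flag. You try to carry out the depth computation at the level of the full algebras $E_i=\Ext_{S_i}(k,k)$ over the exterior base $\Lambda=\Ext_R(k,k)$, which forces you to establish freeness over $\Lambda$ and a K\"unneth-type formula for $\Ext_E(k,E)$; you do not actually prove this, and the sketch (``something over $\Lambda$'', ``should supply exactly this'') is not an argument. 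The fallback to \cite{small} is also not what the paper does.

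The paper sidesteps the whole issue with one extra reduction that you are missing: \cite[Lemma~5.1.6]{stable} gives $\operatorname{depth}\Ext_A(k,k)=\operatorname{depth}\,\mathrm{U}\pi^{\geqslant 2}(A)$ for any local ring $A$. Passing to $\mathrm{U}\pi^{\geqslant 2}$ strips off $\pi^1$, so $\Lambda$ disappears and one is left with the enveloping algebra of a \emph{direct} product of graded Lie algebras. Additivity of depth for such products is then quoted directly from \cite[Proposition~36.2]{rational}. The lower bound $\operatorname{depth}\Ext_{S_i}(k,k)\geqslant 1$ for singular $S_i$ is likewise a citation, \cite[Lemma~5.1.7]{stable}, rather than the ad hoc PBW/socle argument you outline. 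In short: your diagnosis of the Lie-algebra splitting is right, but the clean proof goes through $\mathrm{U}\pi^{\geqslant 2}$ and two citations, not through a tensor-product-over-$\Lambda$ analysis of the full Ext algebras.
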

\begin{proof} For a singular local ring $A$, the depth of $\Ext_A(k,k)$ is at least one; see, for example, \cite[Lemma 5.1.7]{stable}. 
Now the desired result follows from the equalities below
\begin{align*}
\mathrm{depth}\;\Ext_S(k,k)&=\mathrm{depth}\;\mathrm{U}\pi^{\geqslant 2}(S)\\
&=\mathrm{depth}\;\pi^{\geqslant 2}(S)\\
&=\mathrm{depth}\;\pi^{\geqslant 2}(S_1)+\mathrm{depth}\;\pi^{\geqslant 2}(S_2)\\
&=\mathrm{depth}\;\Ext_{S_1}(k,k)+\mathrm{depth}\;\Ext_{S_2}(k,k).
\end{align*}
The first equality comes from \cite[Lemma 5.1.6]{stable} where $\mathrm{U}\pi^{\geqslant 2}(R)$ denotes the universal enveloping algebra of $\pi^{\geqslant 2}(R)$;  the third equality comes from equality \cref{e:isofiber} in the proof of \cref{thm:surjPiIsochar0}---combined with \cref{c:WeakCatGMin}---and \cite[Proposition 36.2]{rational}. The final equality follows again from \cite[Lemma 5.1.6]{stable}.
Since $S_1$ and $S_2$ are both singular, it follows that
\[
\mathrm{depth}\;\Ext_S(k,k)\geqslant 2.\qedhere
\]
\end{proof}

\begin{remark}\label{rem:stable}
A consequence of \Cref{cor:stable} and \cite[Theorem 7.2(3)]{stable}, which asserts if the depth of the Ext algebra of a local ring is at least two, is that the stable cohomology algebra of that ring has a particularly simple structure. Namely, the stable cohomology of a local ring is a direct sum of its Ext algebra and a specific torsion submodule; see the reference above for more details. Furthermore, if the ring $S$ is also assumed to be Gorenstein then its stable cohomology algebra is a trivial extension of its Ext algebra and a shift of the $k$-dual of its Ext algebra; cf.\@ \cite[Theorem 5.2]{ferraro}.
\end{remark}

We end this paper with one final application generalizing results of Quillen in \cite[Corollary~4.9 \& Remark~11.13]{Quillen2}, which can also be interpreted as a generalized ``dual" statement to a result of Quillen \cite[Corollary~7.5]{Quillen} (as well as \cite[Theorem~11.10]{Quillen}) involving Andr\'{e}--Quillen cohomology. The  latter result can also be deduced by an analogous result from local algebra  \cite[Theorem~A]{Moore}.

For a surjective local map $\vp\colon R\to S$ with residue field $k$, we write $\AQC  SRk$ for the Andr\'{e}--Quillen cohomology module of $\vp$ with coefficients in $k$; further details can be found in \cite{Iyengar:2007,Majadas/Rodicio:2010}. \cref{thm:surjPiIsochar0} can be recast in the theorem below by applying \cite[Section~6]{AH:1987}, a result essentially due to Quillen \cite[Theorem~9.5]{Quillen}, provided $k$ has charactersitic zero. 

\begin{corollary}\label{cor:AQ}
Let  $Q\to R\to S_i$ be surjective local ring maps  with residue field $k$ for $i=1,2$. If $S_1,S_2$ are Tor-independent over $R$ and $k$ has characteristic zero, then there is an isomorphism of graded Lie algebras 
\[
 \AQC  {S_1\otimes_R S_2}Qk\cong \AQC  {S_1}Qk \times_{\AQC  RQk} \AQC  {S_2}Qk\,.
\]
\end{corollary}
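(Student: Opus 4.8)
The plan is to reduce Corollary~\ref{cor:AQ} to \cref{thm:surjPiIsochar0} by way of the translation between André--Quillen cohomology with coefficients in $k$ and homotopy Lie algebras that holds in characteristic zero. First I would recall, from \cite[Section~6]{AH:1987} (essentially \cite[Theorem~9.5]{Quillen}), that for a surjective local map $R\to S$ with residue field $k$ of characteristic zero there is a natural isomorphism of graded Lie algebras $\AQC{S}{R}{k}\cong \shift^{-1}\pi^{\geqslant 1}(S\!\mid\! R)$, or more precisely that the relative André--Quillen cohomology of $Q\to R\to S$ with coefficients in $k$ recovers (a shift of) the homotopy Lie algebra of the composition $Q\to S$ in the relevant range — and, crucially, that the maps induced by $\vp_i$ on André--Quillen cohomology correspond under this identification to the maps $\pi(\vp_i)$ of \cref{HomotopyLieAlgebra}. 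Once this dictionary is in place, the functor $\AQC{-}{Q}{k}$ carries the diagram $S_1\xla{\vp_1}R\xra{\vp_2}S_2$ to the diagram of Lie algebras $\pi(S_1)\to\pi(R)\leftarrow\pi(S_2)$ (up to shift and up to the degree-one correction handled by \cref{l:pi1}).

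Next I would invoke \cref{thm:surjPiIsochar0}: since $k$ has characteristic zero, $S=S_1\otimes_R S_2$ with $\vp_1,\vp_2$ Tor-independent gives the isomorphism of graded Lie algebras $\pi(S)\cong \pi(S_1)\times_{\pi(R)}\pi(S_2)$, and this isomorphism is the one canonically induced by functoriality of $\pi$. Because the identification of the previous paragraph is natural in the ring map, transporting this pullback square along it yields precisely
\[
\AQC{S_1\otimes_R S_2}{Q}{k}\cong \AQC{S_1}{Q}{k}\times_{\AQC{R}{Q}{k}}\AQC{S_2}{Q}{k}\,,
\]
as an isomorphism of graded Lie algebras. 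One small point to check is that the pullback of Lie algebras is preserved under the shift functor $\shift$ and under the passage between the "$\geqslant 2$" part and the full object; the former is formal, and the latter is exactly the content of \cref{l:pi1}, whose degree-one statement was already used to upgrade the $\pi^{\geqslant 2}$-isomorphism to the full $\pi$-isomorphism inside the proof of \cref{thm:surjPiIsochar0}.

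The main obstacle I anticipate is \emph{not} any new algebra but rather pinning down the characteristic-zero comparison precisely: one must be careful that the result of \cite[Section~6]{AH:1987} / \cite[Theorem~9.5]{Quillen} is being applied to the correct ring maps (the composites $Q\to S_i$ and $Q\to S$, with $R$ playing the role of an intermediate ring so that the relevant André--Quillen cohomology is the absolute one over $Q$ rather than a relative one), and that the naturality is strong enough to intertwine the comparison isomorphisms with all four maps in the square simultaneously. Concretely, the hard part is verifying that the square of comparison isomorphisms commutes with $\pi(\vp_1),\pi(\vp_2),\pi(\psi_1),\pi(\psi_2)$ on the nose, so that "pullback goes to pullback''; this is where the bulk of the bookkeeping lies, and it is the only place where one genuinely needs the hypothesis $\operatorname{char} k = 0$ beyond what is already built into \cref{thm:surjPiIsochar0}.
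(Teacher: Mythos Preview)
Your proposal is correct and matches the paper's approach: the corollary is obtained directly from \cref{thm:surjPiIsochar0} via the characteristic-zero identification of Andr\'e--Quillen cohomology with the homotopy Lie algebra coming from \cite[Section~6]{AH:1987} (essentially \cite[Theorem~9.5]{Quillen}). The paper does not record a separate proof beyond this sentence, so your additional bookkeeping about naturality and the degree-one piece simply spells out what the paper leaves implicit.
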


\bibliographystyle{amsplain}
\bibliography{biblio}

\providecommand{\bysame}{\leavevmode\hbox to3em{\hrulefill}\thinspace}
\providecommand{\MR}{\relax\ifhmode\unskip\space\fi MR }
\providecommand{\MRhref}[2]{%
  \href{http://www.ams.org/mathscinet-getitem?mr=#1}{#2}
}
\providecommand{\href}[2]{#2}
\begin{thebibliography}{10}

\bibitem{Andre}
Michel Andr\'{e}, \emph{Hopf algebras with divided powers}, J. Algebra
  \textbf{18} (1971), 19--50. \MR{277590}

\bibitem{tensor}
Luchezar~L. Avramov, \emph{The homology of a tensor product of local rings},
  Izv. Akad. Nauk SSSR Ser. Mat. \textbf{39} (1975), 3--14, 239. \MR{0360557}

\bibitem{Flat}
\bysame, \emph{Homology of local flat extensions and complete intersection
  defects}, Math. Ann. \textbf{228} (1977), no.~1, 27--37. \MR{485836}

\bibitem{small}
\bysame, \emph{Small homomorphisms of local rings}, J. Algebra \textbf{50}
  (1978), no.~2, 400--453. \MR{485906}

\bibitem{Asterisque}
\bysame, \emph{Local algebra and rational homotopy}, Algebraic homotopy and
  local algebra ({L}uminy, 1982), Ast\'{e}risque, vol. 113, Soc. Math. France,
  Paris, 1984, pp.~15--43. \MR{749041}

\bibitem{Golod}
\bysame, \emph{Golod homomorphisms}, Algebra, algebraic topology and their
  interactions ({S}tockholm, 1983), Lecture Notes in Math., vol. 1183,
  Springer, Berlin, 1986, pp.~59--78. \MR{846439}

\bibitem{IFR}
\bysame, \emph{Infinite free resolutions [mr1648664]}, Six lectures on
  commutative algebra, Mod. Birkh\"{a}user Class., Birkh\"{a}user Verlag,
  Basel, 2010, pp.~1--118. \MR{2641236}

\bibitem{AvramovFoxby}
Luchezar~L. Avramov and Hans-Bj{\o}rn Foxby, \emph{Locally {G}orenstein
  homomorphisms}, Amer. J. Math. \textbf{114} (1992), no.~5, 1007--1047.
  \MR{1183530}

\bibitem{AFH}
Luchezar~L. Avramov, Hans-Bj{\o}rn Foxby, and Bernd Herzog, \emph{Structure of
  local homomorphisms}, J. Algebra \textbf{164} (1994), no.~1, 124--145.
  \MR{1268330}

\bibitem{AH:1986}
Luchezar~L. Avramov and Stephen Halperin, \emph{Through the looking glass: a
  dictionary between rational homotopy theory and local algebra}, Algebra,
  algebraic topology and their interactions ({S}tockholm, 1983), Lecture Notes
  in Math., vol. 1183, Springer, Berlin, 1986, pp.~1--27. \MR{846435}

\bibitem{AH:1987}
\bysame, \emph{On the nonvanishing of cotangent cohomology}, Comment. Math.
  Helv. \textbf{62} (1987), no.~2, 169--184. \MR{896094}

\bibitem{qci}
Luchezar~L. Avramov, In\^{e}s Bonacho Dos~Anjos Henriques, and Liana~M.
  \c{S}ega, \emph{Quasi-complete intersection homomorphisms}, Pure Appl. Math.
  Q. \textbf{9} (2013), no.~4, 579--612. \MR{3263969}

\bibitem{AlgebraRetracts}
Luchezar~L. Avramov and Srikanth Iyengar, \emph{Andr\'{e}-{Q}uillen homology of
  algebra retracts}, Ann. Sci. \'{E}cole Norm. Sup. (4) \textbf{36} (2003),
  no.~3, 431--462. \MR{1977825}

\bibitem{stable}
Luchezar~L. Avramov and Oana Veliche, \emph{Stable cohomology over local
  rings}, Adv. Math. \textbf{213} (2007), no.~1, 93--139. \MR{2331239}

\bibitem{Briggs}
Benjamin Briggs, \emph{Local commutative algebra and hochschild cohomology
  through the lens of koszul duality}, Ph.D. thesis, University of Toronto
  (2018), \url{https://www.math.utah.edu/~briggs/briggsthesis.pdf}.

\bibitem{Briggs:2021}
\bysame, \emph{Vasconcelos' conjecture on the conormal module}, Invent. Math.
  \textbf{227} (2022), no.~1, 415--428. \MR{4359479}

\bibitem{FiveAuthorPaper}
Yves F\'{e}lix, Stephen Halperin, Carl Jacobsson, Clas L\"{o}fwall, and
  Jean-Claude Thomas, \emph{The radical of the homotopy {L}ie algebra}, Amer.
  J. Math. \textbf{110} (1988), no.~2, 301--322. \MR{935009}

\bibitem{rational}
Yves F\'{e}lix, Stephen Halperin, and Jean-Claude Thomas, \emph{Rational
  homotopy theory}, Graduate Texts in Mathematics, vol. 205, Springer-Verlag,
  New York, 2001. \MR{1802847}

\bibitem{ferraro}
Luigi Ferraro, \emph{A bimodule structure for the bounded cohomology of
  commutative local rings}, J. Algebra \textbf{537} (2019), 297--315.
  \MR{3990045}

\bibitem{GL}
Tor~H. Gulliksen and Gerson Levin, \emph{Homology of local rings}, Queen's
  Paper in Pure and Applied Mathematics, No. 20, Queen's University, Kingston,
  Ont., 1969. \MR{0262227}

\bibitem{Iyengar:2007}
Srikanth Iyengar, \emph{Andr\'{e}-{Q}uillen homology of commutative algebras},
  Interactions between homotopy theory and algebra, Contemp. Math., vol. 436,
  Amer. Math. Soc., Providence, RI, 2007, pp.~203--234. \MR{2355775}

\bibitem{Johnson}
Mark~R. Johnson, \emph{Linkage and sums of ideals}, Trans. Amer. Math. Soc.
  \textbf{350} (1998), no.~5, 1913--1930. \MR{1432202}

\bibitem{JM}
David~A. Jorgensen and W.~Frank Moore, \emph{Minimal intersections and
  vanishing (co)homology}, J. Commut. Algebra \textbf{1} (2009), no.~3,
  507--536. \MR{2524864}

\bibitem{levin}
Gerson Levin, \emph{Large homomorphisms of local rings}, Math. Scand.
  \textbf{46} (1980), no.~2, 209--215. \MR{591601}

\bibitem{Majadas/Rodicio:2010}
Javier Majadas and Antonio~G. Rodicio, \emph{Smoothness, regularity and
  complete intersection}, London Mathematical Society Lecture Note Series, vol.
  373, Cambridge University Press, Cambridge, 2010. \MR{2640631}

\bibitem{MM}
John~W. Milnor and John~C. Moore, \emph{On the structure of {H}opf algebras},
  Ann. of Math. (2) \textbf{81} (1965), 211--264. \MR{0174052}

\bibitem{Moore}
W.~Frank Moore, \emph{Cohomology over fiber products of local rings}, J.
  Algebra \textbf{321} (2009), no.~3, 758--773. \MR{2488551}

\bibitem{PS}
Christian Peskine and Lucien Szpiro, \emph{Dimension projective finie et
  cohomologie locale. {A}pplications \`a la d\'{e}monstration de conjectures de
  {M}. {A}uslander, {H}. {B}ass et {A}. {G}rothendieck}, Inst. Hautes
  \'{E}tudes Sci. Publ. Math. (1973), no.~42, 47--119. \MR{374130}

\bibitem{Quillen2}
Daniel Quillen, \emph{Homology of commutative rings}, mimeographed notes, MIT
  (1968), 1--81.

\bibitem{Quillen}
\bysame, \emph{On the (co-) homology of commutative rings}, Applications of
  {C}ategorical {A}lgebra ({P}roc. {S}ympos. {P}ure {M}ath., {V}ol. {XVII},
  {N}ew {Y}ork, 1968), Amer. Math. Soc., Providence, R.I., 1970, pp.~65--87.
  \MR{0257068}

\bibitem{Roberts}
Paul Roberts, \emph{Intersection theorems}, Commutative algebra ({B}erkeley,
  {CA}, 1987), Math. Sci. Res. Inst. Publ., vol.~15, Springer, New York, 1989,
  pp.~417--436. \MR{1015532}

\bibitem{Sjodin}
Gunnar Sj\"{o}din, \emph{Hopf algebras and derivations}, J. Algebra \textbf{64}
  (1980), no.~1, 218--229. \MR{575792}

\bibitem{Keller}
Keller VandeBogert, \emph{Vanishing of avramov obstructions for products of
  sequentially transverse ideals}, 2021.

\bibitem{Vasconcelos}
Wolmer~V. Vasconcelos, \emph{Arithmetic of blowup algebras}, London
  Mathematical Society Lecture Note Series, vol. 195, Cambridge University
  Press, Cambridge, 1994. \MR{1275840}

\end{thebibliography}

\end{document}